\newcommand{\ten}{\otimes}
\newcommand{\HH}{\operatorname{HH}}
\newcommand{\bimods}{\textrm{-bimod}}
\newcommand{\bigmods}{\textrm{-bimod}}
\newcommand{\twisten}{\otimes^t\hspace{-1pt}}
\newcommand{\homo}{\operatorname{Hom}}
\newcommand{\bv}{\operatorname{BV}}
\newcommand{\awt}{\operatorname{AW}^t}
\newcommand{\ezt}{\operatorname{EZ}^t}
\newcommand{\pr}{\operatorname{pr}}
\newcommand{\inc}{\operatorname{in}}
\newcommand{\elr}{[r_1 | \cdots | r_n]}
\newcommand{\els}{[s_{n+1} | \cdots |s_{n+m}]}
\newcommand{\bezt}{\underline{\operatorname{EZ}}^t}
\newcommand{\bawt}{\underline{\operatorname{AW}}^t}
\newcommand{\sign}{\operatorname{sign}}
\theoremstyle{plain}
\newtheorem{theorem}{Theorem}
\newtheorem*{theorem*}{Theorem}
\newtheorem{lemma}[theorem]{Lemma}
\newtheorem{corollary}[theorem]{Corollary}
\newtheorem*{corollary*}{Corollary}
\newtheorem{examples}[theorem]{Example}
\theoremstyle{definition}
\newtheorem{definitionproposition}[theorem]{Definition/Proposition}
\newtheorem{notation}[theorem]{Notation}
\newtheorem*{definition*}{Definition}
\newtheorem{remark}[theorem]{Remark}
\newtheorem{defn}[theorem]{Definition}
\renewcommand{\env@cases}[1][@{}l@{\quad}l@{}]{%
  \let\@ifnextchar\new@ifnextchar
  \left\lbrace
  \def\arraystretch{1.2}%
  \array{#1}%
}
\newlist{pfparts}{enumerate}{1}
\setlist[pfparts,1]{%
  label=\arabic.,
  font=\normalfont\textbf,
  itemindent=2pt,
  wide,
  itemsep=0pt,topsep=2pt,
  labelsep=0.75ex
}
\def\namedlabel#1#2{\begingroup
    #2%
    \def\@currentlabel{#2}%
    \phantomsection\label{#1}\endgroup 
}
\newtheorem{proposition}[theorem]{Proposition}
\declaretheoremstyle[
headfont=\color{black}\normalfont\bfseries,
notefont = \color{red}\normalfont,
bodyfont=\color{black}\normalfont\itshape,
]{coloredDefinition}
\declaretheoremstyle[
headfont=\color{black}\normalfont\bfseries,
notefont = \color{teal}\normalfont,
bodyfont=\color{black}\normalfont\itshape,
]{coloredExample}
\renewenvironment{abstract}
{\begin{quote}
\noindent\par{\scshape\abstractname.}}
{\medskip\noindent
\end{quote}
}
\title{\Large BV Structure on the Hochschild Cohomology\\ of Twisted Tensor Products}
\author{\textsc{Matthew Antrobus}}
\date{}
\let\inserttitle\@title
\begin{document}
\maketitle
\begin{abstract} 
Given  $k$-algebras $R$ and $S$, graded by Abelian groups $A$ and $B$ respectively, and a bicharacter $t:A \otimes B \rightarrow k^\times$, one may form the twisted tensor product algebra $R\twisten S$. The Hochschild Cohomology of such algebras has been extensively studied, for its relation to Happel's conjecture and for use in calculations. Should each of $R$ and $S$ be Frobenius, admitting a semisimple Nakayama Automorphism, then each Hochschild Cohomology admits the structure of a $\bv$-algebra. We describe, under a grading condition on their Frobenius structures, the $\bv$-structure on the Hochschild Cohomology of their twisted tensor product. In particular, this greatly simplifies computations of the $\bv$-structure of the Hochschild Cohomology of Quantum Complete Intersections. Our result specialises to give new results in the untwisted theory.
\end{abstract}
\vspace{-10pt}
\tableofcontents
\newpage
\section{Introduction}
The Hochschild Cohomology of a tensor product of algebras twisted by a bicharacter has been studied extensively in different guises. The Hochschild cohomology of quantum complete intersections (which are twisted tensor products of algebras of the form $k[x]/x^n$) has been studied by various authors (\cite{erdmann}, \cite{bergh}, \cite{oppermann}). In particular, they were studied in \cite{buchweitz}, to answer a question of Happel, regarding finiteness of Hochschild Cohomology.

Generalising this work and as a step towards understanding the Hochschild Cohomology of more general twisted tensor products, Briggs and Witherspoon (\cite{BriggsWitherspoon}) demonstrated a decomposition for the Hochschild cohomology of a bicharacter twisted tensor product. They further gave formulas for both the product and Gerstenhaber bracket in terms of the Hochschild Cohomologies of the original algebras, with twisted coefficients. 

In this paper, we extend their work to describe the $\bv$-structure on the Hochschild cohomology of a twisted tensor product of Frobenius algebras (under mild conditions). In particular, we describe the $\bv$-structure on the Hochschild cohomology of the twisted tensor product in terms of twisted $\bv$-structures on the original cohomologies. A $\bv$-structure on Hochschild cohomology exists for certain classes of algebras, and manifests as a degree minus one operator with square zero; it is particularly useful in that the Gerstenhaber Bracket measures the failure of the $\bv$-operator to be a derivation. Thus calculating the $\bv$-structure and product structure gives the Gerstenhaber bracket. 

This additionally extends the paper of Hou and Wu, (\cite{hou}) wherein the $\bv$-structures of a restricted class of quantum complete intersections are calculated. Our result further extends the literature of usual (non-twisted) tensor products in the following way. It was initially proved by T. Tradler in \cite{Tradler} that the Hochschild Cohomology of a symmetric algebra (i.e, a Frobenius Algebra with symmetric inner product) is a $\bv$-algebra; it was subsequently proved by Le and Zhou \cite{Zhou} that this $\bv$-structure is respected by taking a tensor product of algebras. It was shown slightly later by Lambre, Zhou and Zimmerman (\cite{Lambre}) that we can generalise the result of Tradler. The failure of the inner product on a Frobenius algebra to be symmetric is measured (in some sense), by the Nakayama automorphism; should this automorphism be semisimple,  we also get $\bv$-structure. To the authors' knowledge, the proof of \cite{Zhou} has not been generalised to the (non-twisted), non-symmetric case, though our result specialises to this. 

Our main result is as follows, stated in terms of the decomposition of Hochschild Cohomology from \cite{BriggsWitherspoon}, which is explained later. Let $R$, $S$ be algebras over a field $k$, graded by Abelian groups $A$ and $B$ respectively, with a bicharacter $t:A \otimes B \rightarrow k^\times$. Suppose that there exist $\sigma_R \in A$, $\sigma_S \in B$ so that the morphisms
$$\begin{aligned}{\langle-,-\rangle_R}: R \otimes R \rightarrow k[\sigma_R] \hspace{20pt}{\langle-,-\rangle_S}: S \otimes S \rightarrow k[\sigma_S] \end{aligned}$$
are graded, and that the Nakayama Automorphisms $\nu_R: R \rightarrow R$ and $\nu_S: S \rightarrow S$ are semisimple. Then $R\twisten S$ admits a semisimple Nakayama automorphism, and we have the following theorem.
\begin{theorem*}  The $\bv$-operator on the Hochschild Cohomology $\HH^*(R \twisten S, R \twisten S)$ can be expressed in the following way. For $f\in C^*(R,R_{\hat{b}})^a$ and $g \in C^*(S, \prescript{}{\hat{a}}{S})^b$, we have
$${\Delta}(f\boxtimes g) =  (-1)^{mn}(\Delta_c(f) \boxtimes g +(-1)^{n} f \boxtimes \prescript{}{d}{\Delta(g)})$$
where $$\Delta_b: C^*(R,R_{\hat{b}}) \rightarrow C^{*-1}(R,R_{\hat{b}})  \hspace{20pt}\Delta_a: C^{*-1}(S, \prescript{}{\hat{a}}{S})\rightarrow C^*(S, \prescript{}{\hat{a}}{S}) $$ are twisted $\bv$-operators.
\end{theorem*}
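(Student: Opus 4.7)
The plan is to work at the cochain level, using the twisted Alexander--Whitney and Eilenberg--Zilber quasi-isomorphisms $\awt, \ezt$ (and their bar-complex analogues $\bawt, \bezt$) that induce the Briggs--Witherspoon decomposition, combined with an explicit chain-level formula for the BV operator of a Frobenius algebra with semisimple Nakayama automorphism. In this setting, the BV operator is (up to Frobenius duality) the transport of Connes' $B$-operator via the isomorphism $\HH^n(A,A) \cong \HH^{d-n}(A, A_\nu)$, where $A_\nu$ denotes the bimodule twisted on the right by the Nakayama automorphism. A preliminary step is therefore to confirm that $R \twisten S$ is Frobenius with semisimple Nakayama automorphism assembled from $\nu_R$, $\nu_S$ and the bicharacter $t$, so that Tradler's prescription makes sense on the whole twisted tensor product.

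First I would spell out chain-level formulas for the twisted BV operators $\Delta_b$ on $C^*(R, R_{\hat b})$ and $\Delta_a$ on $C^*(S, \prescript{}{\hat a}{S})$ by running Tradler's construction on the twisted coefficient bimodules, and verify that the homogeneity hypotheses on $\langle -,-\rangle_R$ and $\langle -,-\rangle_S$ make these formulas well defined. On the full twisted tensor product I would write $\Delta$ via the analogous Tradler prescription and pull $\Delta(f \boxtimes g)$ through $\bezt$. This reduces the task to understanding how Connes' $B$-operator on the bar complex of $R \twisten S$ behaves when restricted to shuffled elements coming from $f \boxtimes g$.

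The heart of the argument is a twisted Leibniz-type identity for $B$ against $\bezt$, modulo coboundaries, of the shape
$$B \circ \bezt \;=\; \bezt \circ \bigl( B \boxtimes \mathrm{id} \;+\; (-1)^{n}\, \mathrm{id} \boxtimes B \bigr),$$
where the two internal $B$'s are the relevant twisted Connes operators and the twists on the coefficient modules record the grading degrees $\sigma_R, \sigma_S$. Combined with the decomposition of the Frobenius pairing on $R \twisten S$ as the $t$-twisted tensor of those on $R$ and $S$, this identity should reproduce the displayed formula: the global $(-1)^{mn}$ arises from the Koszul signs incurred when cyclic rotations are reordered across the tensor factors, and the twists labelling $\Delta_c$ and $\prescript{}{d}{\Delta}$ are exactly the bicharacter values that record the passage of Nakayama-graded elements of one factor past the other.

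The main obstacle is the simultaneous bookkeeping of signs and bicharacter twists. The BV operator is cyclic in nature whereas $\bezt$ is shuffle-like, so the two must be compared by partitioning the cyclic rotations of a shuffled tensor according to whether the cyclically-last entry originates from $R$ or from $S$. Each crossing of an $R$-factor past an $S$-factor contributes a scalar from $t$, and one must verify that these scalars reassemble precisely into the twists predicted by the Nakayama degrees $\sigma_R, \sigma_S$, rather than into some spurious correction term. I would carry out this bookkeeping on the normalised bar complex, where a substantial number of shuffle summands vanish, to keep the combinatorics tractable.
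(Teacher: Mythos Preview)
Your plan is essentially the paper's own: transport the Connes operator through $\ezt$ and $\awt$, split the cyclic rotations according to whether the rotated entry came from the $R$- or the $S$-factor, and use the normalised bar complex to kill the remaining shuffle terms while tracking the bicharacter scalars that become the twisted Nakayama automorphisms. The paper carries this out by pairing against an arbitrary homology element $(r_0\otimes s_0,\ldots)$ and computing $\awt\circ B(r_0\otimes s_0,\ezt(\cdots))$ directly.

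One point of caution concerns your formulation of the key identity. You propose to prove $B\circ\bezt = \bezt\circ(B\boxtimes\mathrm{id} + (-1)^n\,\mathrm{id}\boxtimes B)$ modulo coboundaries, but the paper never establishes anything of this shape. What it actually shows is the weaker statement obtained after composing on the left with $\awt$: the vanishing mechanism is not a homotopy but the fact that, in the reduced complex, the Alexander--Whitney component $\phi_{n+m+1-l,l}$ annihilates the rotated shuffled tensor unless $l\in\{m,m+1\}$, and for each surviving $l$ exactly one shuffle in $\mathfrak{S}_{n,m}$ contributes. Since $\awt\circ\ezt=\mathrm{id}$, this is all you need; but if you try to establish the identity at the level of $B\circ\bezt$ alone you will likely find spurious terms that do not obviously cancel. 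I would recommend reformulating the target of your computation as $\awt\circ B\circ\ezt$ from the outset, which both matches what the duality with the Frobenius pairing actually requires and makes the shuffle-killing step transparent.
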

This of course specialises, by taking a trivial twisting.
\begin{corollary*} Given Frobenius algebras $R$ and $S$ admitting semisimple Nakayama Automorphisms, and $f \in C^*(R, R)$, $g \in C^*(S,S)$, the $\bv$-operator on $\HH^*(R\otimes S, R\otimes S)$ is given by
$${\Delta}(f\boxtimes g) =  (-1)^{mn}(\Delta(f) \boxtimes g +(-1)^{n} f\boxtimes \prescript{}{}{\Delta(g)})$$
\end{corollary*}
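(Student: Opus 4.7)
My plan is to deduce the corollary as an immediate specialisation of the preceding theorem to the case of a trivial bicharacter. First I would regard the given Frobenius algebras $R$ and $S$ as being graded by the trivial abelian groups $A = B = 0$, so that both algebras are concentrated in a single degree; the Frobenius pairings $\langle -,- \rangle_R$ and $\langle -,- \rangle_S$ are then trivially graded with $\sigma_R = 0$ and $\sigma_S = 0$. The only bicharacter $t : A \otimes B \to k^\times$ is the trivial one, and so $R \twisten S$ coincides with the ordinary tensor product $R \otimes S$. The hypotheses of the theorem are therefore satisfied, and in particular $R \otimes S$ inherits a semisimple Nakayama automorphism.

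Next I would unpack what the twisted ingredients in the statement of the theorem become under this trivialisation. The characters $\hat a : B \to k^\times$ and $\hat b : A \to k^\times$ obtained from $t$ by fixing an argument are both forced to be the trivial character, so the twisted bimodules $R_{\hat b}$ and $\prescript{}{\hat a}{S}$ coincide with the ordinary bimodules $R$ and $S$. Consequently the complexes $C^*(R, R_{\hat b})$ and $C^*(S, \prescript{}{\hat a}{S})$ become $C^*(R,R)$ and $C^*(S,S)$, and the twisted $\bv$-operators $\Delta_b$ and $\Delta_a$ reduce to the ordinary $\bv$-operators $\Delta$ on Hochschild cochains, whose existence for a Frobenius algebra with semisimple Nakayama automorphism is the theorem of Lambre--Zhou--Zimmerman.

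Substituting these identifications into the formula of the theorem yields the formula claimed in the corollary, since the signs $(-1)^{mn}$ and $(-1)^{n}$ depend only on the homological degrees $m, n$ of $f$ and $g$, which are untouched by the trivialisation of the group gradings. The only point requiring care, and which I regard as the principal (and rather minor) obstacle, is to verify that the $(A \oplus B)$-graded decomposition of $\HH^*(R \twisten S, R \twisten S)$ from \cite{BriggsWitherspoon} collapses cleanly to the Künneth-style identification $\HH^*(R, R) \otimes \HH^*(S, S) \cong \HH^*(R \otimes S, R \otimes S)$ when $A = B = 0$, so that the box product $f \boxtimes g$ is interpreted compatibly on both sides. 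Once this bookkeeping is in place, no further argument is required beyond an appeal to the theorem.
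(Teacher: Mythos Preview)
Your proposal is correct and matches the paper's approach exactly: the paper proves this corollary in a single phrase, ``This of course specialises, by taking a trivial twisting,'' and your argument is simply a careful unpacking of that specialisation. The bookkeeping you flag about the box product and the decomposition collapsing is routine and not treated explicitly in the paper either.
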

which generalises the results of \cite{Zhou} by upgrading from requiring that the Nakayama Automorphism is the identity.
  \section{Twisted Tensor Products of Frobenius Algebras}
\subsection{Basic Notions} Take $R$, $S$ finite dimensional $k$-algebras graded by Abelian Groups $A$ and $B$ respectively, so that we may write
$$R = \bigoplus_{a\in A} R^a \hspace{20pt} S = \bigoplus_{b \in B} S^b$$
Given $a \in A$, $b \in B$, and $r \in R^a$, $s \in S^b$, we write $a=:|r|, b=:|s|$. We also have a \emph{twisting}, (often called a bicharacter), which is a group homomorphism $t: A \otimes B \rightarrow k^{\times}$. We will call the values taken on by $t$ twisting coefficients. We define the algebra $R \otimes^t S$, whose underlying vector space is $R \otimes S$, and whose multiplication is given by 
$$(r \otimes s) \times^t (r' \otimes s') = t(|r'|,|s|) rr'\otimes ss'$$
for $r,r' \in R$, $s,s' \in S$ homogeneous. We abuse notation and write $t(r',s) = t(|r'|, |s|)$, as is standard. From here on, suppose $R$ and $S$ are both Frobenius Algebras; that is, they admit inner products
$$\langle -, - \rangle_R : R \otimes R \rightarrow k \hspace{20pt}\langle -, - \rangle_S : S \otimes S \rightarrow k$$
which are non-degenerate (so that $ x\mapsto \langle -,  x \rangle_{R}$ is injective), and have the following invariance property:
$$\langle ab, c \rangle = \langle a, bc \rangle$$ 
\begin{examples} 
Take $\Lambda(n) = k[x]/x^n$, which admits a grading by $\mathbb{Z}$ where $|x|=1$. $\Lambda(n)$ admits a Frobenius structure via
$$\langle x^i, x^j \rangle = \begin{cases} 1 \textrm{ if } i+j=n \\ 0 \textrm{ otherwise } \end{cases}$$
Choosing any $q \in k^\times$, we can choose a twisting map $t:\mathbb{Z} \otimes \mathbb{Z} \rightarrow k^\times$,  via the following formula:
$$t(i\otimes j) = q^{ij}$$
Constructing $\Lambda_q(n,m) = \Lambda(n) \twisten \Lambda(m)$, to obtain the first example of the \emph{quantum complete intersections} that motivate our study. Iterating this construction, (with adjusted twisting maps for more variables) gives rise to all such quantum complete intersections.
\end{examples}
\subsection{Frobenius Structure on Twisted Tensor Products}
On homogeneous elements $r,r' \in R$, $s,s'\in S$, define:
$$\langle r \otimes s ,r' \otimes s' \rangle_{R\otimes^t S}= t(r',s)\langle r,r' \rangle_R \langle s, s' \rangle_S$$
To lighten the load of notation, we will write $\langle-,- \rangle := \langle -,- \rangle_{R \otimes^t S}$. 
\begin{proposition} $\langle-,- \rangle$ makes $R \otimes^t S$ into a Frobenius Algebra. \end{proposition}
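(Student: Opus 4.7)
The plan is to verify the two defining properties of a Frobenius form for $\langle-,-\rangle$ on $R \twisten S$: invariance under the twisted multiplication $\times^t$, and non-degeneracy.

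For invariance, I would evaluate both sides of $\langle (r_1 \otimes s_1) \times^t (r_2 \otimes s_2),\, r_3 \otimes s_3 \rangle = \langle r_1 \otimes s_1,\, (r_2 \otimes s_2) \times^t (r_3 \otimes s_3) \rangle$ on homogeneous pure tensors. Expanding the twisted product, the left side becomes $t(r_2, s_1)\, t(r_3, s_1 s_2) \langle r_1 r_2, r_3 \rangle_R \langle s_1 s_2, s_3 \rangle_S$, and the right side becomes $t(r_3, s_2)\, t(r_2 r_3, s_1) \langle r_1, r_2 r_3 \rangle_R \langle s_1, s_2 s_3 \rangle_S$. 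Applying the bicharacter identities $t(x, y+y') = t(x,y)\, t(x,y')$ and $t(x+x', y) = t(x, y)\, t(x', y)$, both twisting products reduce to the common expression $t(r_2, s_1)\, t(r_3, s_1)\, t(r_3, s_2)$. Invariance of $\langle -, -\rangle_R$ and $\langle -, -\rangle_S$ then equates the remaining pairings, finishing this step.

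For non-degeneracy, I would use the $A$-grading on $R$ to decompose $R \otimes S = \bigoplus_{a \in A} R^a \otimes S$. Define $\Psi : R \otimes S \to (R \otimes S)^* \cong R^* \otimes S^*$ by $\Psi(z)(w) = \langle w, z \rangle$. Restricted to a summand $R^a \otimes S$, the map $\Psi$ factors as the tensor product of the standard Frobenius injection $R^a \hookrightarrow R^*$, $r' \mapsto \langle -, r'\rangle_R$, with the $t$-twisted map $\tilde{\psi}_a : S \to S^*$ defined on homogeneous inputs by $\tilde{\psi}_a(s')(s) = t(a, |s|)\langle s, s'\rangle_S$. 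The latter is injective (since $t(a,|s|) \in k^\times$ for every homogeneous $s$ and $\langle -, -\rangle_S$ is non-degenerate), hence an isomorphism by dimension count. The subspaces $\langle -, R^a \rangle_R \subseteq R^*$ are linearly independent as $a$ varies (the full Frobenius map is injective and the $R^a$ form a direct sum), so the images of $\Psi|_{R^a \otimes S}$ are linearly independent subspaces of $R^* \otimes S^*$. Combining injectivity on each summand with linear independence of the images gives injectivity of $\Psi$, i.e., non-degeneracy.

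The technical heart of the proof is the invariance calculation, where one must carefully decompose each multi-degree twisting coefficient such as $t(r_3, s_1 s_2)$ and $t(r_2 r_3, s_1)$ via the bicharacter property and observe that the resulting threefold products match on both sides. The non-degeneracy step, by contrast, is essentially formal once the bigraded decomposition is in hand and one recognises the twist as a nowhere-vanishing rescaling on each graded piece.
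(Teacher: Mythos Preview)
Your invariance calculation is essentially identical to the paper's: both expand the twisted products on homogeneous pure tensors and use the bicharacter identities to match the two sides.

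For non-degeneracy the paper takes a different, more explicit route. It fixes homogeneous bases $\{e_i\}$ of $R$ and $\{f_j\}$ of $S$, chooses a left dualizing basis $\{\epsilon_i\}$ for $R$ and a right dualizing basis $\{\phi_j\}$ for $S$, and then checks directly that if $x=\sum_{i,j}\lambda_{i,j}\,\epsilon_i\otimes f_j$ satisfies $\langle x,-\rangle=0$, pairing against $e_{i_0}\otimes\phi_{j_0}$ kills each $\lambda_{i_0,j_0}$ (the twisting coefficient is a unit, so can be cancelled). Your argument is more structural: you decompose by the $A$-grading, observe that on each $R^a\otimes S$ the map $\Psi$ is a tensor product of injections (the second factor being the usual Frobenius map for $S$ precomposed with an invertible diagonal rescaling), and then assemble injectivity from the linear independence of the images $\langle-,R^a\rangle_R\otimes S^*$. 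Both arguments hinge on the same point, namely that the twist contributes only invertible scalars on each graded piece; your version makes this visible without choosing bases, while the paper's version is shorter and completely elementary once the dualizing bases are in hand.
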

\begin{proof} We first must show that $ \langle-,- \rangle$ is a well defined morphism $R \otimes^t S \otimes R \otimes^t S \rightarrow k$. We note that restricting our morphism to the direct summands of the graded decomposition
$$R^{a} \otimes S^{b} \otimes R^{a'} \otimes S^{b'} \rightarrow k$$
our map is simply the composition
$$R^{a} \otimes S^{b} \otimes R^{a'} \otimes S^{b'} \xrightarrow{\cdot t(a',b)} R^{a} \otimes R^{a'} \otimes S^{b} \otimes S^{b'} \xrightarrow{\langle-,-\rangle_R \otimes\langle -,- \rangle_S} k\otimes k \xrightarrow{\mu} k$$
which is clearly linear. 
We next check the Frobenius Condition. This is the requirement that
$$ \langle (a \otimes b) \times (c\otimes d), e \otimes f \rangle = \langle a \otimes b, (c \otimes d) \times (e \otimes f)\rangle $$
and indeed, enduring some algebra
\begin{align*}&\langle ( a \otimes b ) \times^t (c \otimes d ), e \otimes f\rangle\\
= \;\;&\langle t(c,b)(ac \otimes bd), e\otimes f\rangle \\
=\;\;&t(c,b)t(e,bd)\langle ac, e \rangle_R \langle bd, f \rangle_S\\
=\;\;&t(c,b)t(e,b)t(e,d)\langle a, ce\rangle_R \langle b, df \rangle_S \\
= \;\;&t(ce,b)t(e,d)\langle a, ce \rangle_R \langle b, df \rangle_R \\
=\;\; & t(e,d) \langle a \otimes b, ce \otimes df\rangle \\
=\;\; & \langle a \otimes b, (c \otimes d) \times^t (e \otimes f)\rangle
\end{align*}
we see this is the case. \\[5pt]
We now only need to show our inner product is non-degenerate.
We know that both $R$ and $S$ are finite dimensional (as the Frobenius condition forces $R \cong D(R)$), so we may choose bases $\{ e_1,\cdots, e_n\}$ and $\{f_1, \cdots, f_m\}$ respectively, and suppose that these bases are homogeneous. Further, as each of $\langle -,- \rangle_R$ and $\langle -,- \rangle_S$ are non-degenerate, we can choose (left/right) dualizing\footnote{This choice of phrase is to avoid confusion between dual bases, which live in the dual.} bases, $\{\epsilon_1, \cdots, \epsilon_n\}$ and $\{ \phi_1, \cdots, \phi_m\}$ so that
$$\langle \epsilon_i, e_j \rangle_R = \delta_{i,j}1_k \textrm{ (left) } \hspace{20pt} \langle f_i, \phi_j \rangle_S = \delta_{i,j}1_k \textrm{ (right) }$$
To construct these explicitly, we take what is often called the ``dual'' basis of $\{e_1,...e_n\}$ in $D(A)$; i.e, the morphisms
$$d_i(e_j) = \delta_{i,j} 1_k$$
and consider the inverse image of this basis under the isomorphisms $(x \mapsto \langle x, - \rangle_R)$ and $(y \mapsto \langle - , y \rangle_S)$. Note, we have chosen dualizing bases in different coordinates for $R$ and $S$, to make our proof easier. \\[5pt]
Suppose we have $x \in R \twisten S$ such that
$$\langle  x, a \otimes b \rangle = 0 \textrm{ for all } a\otimes b \in R\otimes^t S.$$
Then we know that $\{ \epsilon_i \otimes f_j \}_{i,j}$ form a basis of $R \twisten S$, so write $x = \sum_{i,j} \lambda_{i,j} \epsilon_i \otimes f_j$.
\begin{align*}0= \langle x, e_{i_0} \otimes \phi_{j_0} \rangle&= \sum_{i,j}\lambda_{i,j}\langle \epsilon_i \otimes f_j, e_{i_0} \otimes \phi_{j_0} \rangle \\
&= \sum \lambda_{i,j} t(f_j, e_{i_0})\langle\epsilon_i, e_{i_0}\rangle_R \langle f_j, \phi_{j_0} \rangle_S \\
&= \lambda_{i_0,j_0}t(f_{j_0}, e_i).
\end{align*}
So then, as $t: A \otimes B \rightarrow k^{\times}$, we know that $t(f_{j_0}, e_i)\not=0$, and thus $\lambda_{i_0,j_0}=0$. From here it is clear that $x=0$.
\end{proof}
\section{Semisimplicity of the Nakayama Automorphism}
\subsection{Grading and the Nakayama Automorphism}
\begin{notation} In future, for convenience when denoting the inner product in diagrams, we will write
$$\langle \cdot, - \rangle:= ( a \mapsto \langle a, - \rangle )$$
$$\langle -, \cdot \rangle:= ( a \mapsto \langle -, a \rangle ) $$
\end{notation}
We have shown that $R \otimes^t S$ is a Frobenius Algebra. We now seek to show that should $R$ and $S$ both admit diagonalisable Nakayama Automorphisms, then $R \otimes^t S$ does also. To do this, we will need to introduce an assumption on the interaction between the grading and the inner product. We will ask that the morphisms
$$\langle - ,- \rangle_R : R \otimes R \rightarrow k \hspace{20pt} \langle -,- \rangle_S: S \otimes S \rightarrow k$$
are such that there exist elements $\sigma_R \in A$, and $\sigma_S \in B$ so that
$$R \otimes R \rightarrow k[\sigma_R], \hspace{20pt} S \otimes S \rightarrow k[\sigma_S]$$
are graded, where we view $k$ as concentrated in degree $0$, and we have that $k[\sigma_R]^{a} =k^{a+\sigma_R}$. We know for $a, a' \in A$, we have
$$R^a \otimes R^{a'} \rightarrow (k[\sigma_R])^{a+a'}=k^{a+a'+\sigma_R}.$$
Then, should $r, r'\in R$ be homogeneous elements with non-vanishing inner product, we must have $|r|+|r'|+\sigma_R=0$. We ask for this assumption as it is satisfied by our most important examples, and it forces the Nakayama Automorphism to be graded.
\begin{examples} 
Given $\Lambda(n)=k[x]/x^n$ as defined before, we recall that
$$\langle x^i, x^j \rangle = \begin{cases} 1 \textrm{ if } i+j=n \\ 0 \textrm{ otherwise. } \end{cases}$$
But then we have, for $r,r' \in \Lambda(n)$ homogeneous 
$$\langle r, r' \rangle \not= 0 \implies |r|+|r'|+(-n)=0$$
So that in this case, we have such a shift, equal to $-n$. 
\end{examples}
As $R$ is a graded algebra, $D(R)$ is also naturally graded, as follows:
$$D(R)_m := \{ f : R \rightarrow k: f(r) =0 \textrm{ for } r \not \in R^{-m}\}$$
We note that the statements that the following maps are graded
$$\begin{aligned} \langle \cdot, - \rangle_R, \langle -, \cdot \rangle_R : R \rightarrow D(R)[\sigma_R] \end{aligned}$$
is exactly the requirement that
$$\langle r, r' \rangle \not= 0 \implies |r'|+\sigma_R =-|r|$$
Which is equivalent to our earlier condition. Note that this condition does not always hold. The inner product is uniquely determined up to inner automorphism on $R$ (and $S$); but then, acting by  a non-homogeneous unit will interfere with the grading. \\

We briefly recall the properties of the Nakayama Automorphism. For a Frobenius algebra $R$, coming equipped with a distinguished inner product $\langle -, - \rangle_R$, the Nakayama Automorphism $\nu_R$ is defined so that:
$$\langle a ,- \rangle_R = \langle -,\nu_R(a) \rangle_R$$
which encodes that $\nu$ is the composition of the following two maps
$$R \xrightarrow{ \langle \cdot , - \rangle_R} D(R) \xrightarrow{ (\langle -, \cdot \rangle_R)^{-1}} R$$
It is clear from the composition above that $\nu_R$ is an automorphism of vector spaces, and also easy to check that $\nu_R$ is a ring automorphism. Moreover, we have the following crucial result.
\begin{lemma} The morphism $R \rightarrow D(R)$ given by $r \mapsto \langle -, r \rangle_R$ is a bimodule isomorphism where the right action of $R$ on itself is via $\nu_R$. That is to say,
$$R_{\nu_R} \cong D(R)$$
\end{lemma}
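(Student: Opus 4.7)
The plan is as follows. The map $\phi: R \to D(R)$ given by $\phi(r) = \langle -, r\rangle_R$ is already a vector space isomorphism by non-degeneracy of the inner product (this was used implicitly in the previous section, where we inverted exactly this map to define the Nakayama automorphism). So the content of the lemma lies entirely in verifying the two bimodule compatibilities. First I would fix conventions by recording explicitly the $R$-bimodule structure on $D(R)$: for $f \in D(R)$ and $a,b,x \in R$, we have $(a\cdot f \cdot b)(x) = f(bxa)$.

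Next I would verify left $R$-linearity of $\phi$ directly. For $a, r, x\in R$ we compute
$$\phi(ar)(x) = \langle x, ar\rangle_R = \langle xa, r\rangle_R = \phi(r)(xa) = (a \cdot \phi(r))(x),$$
where the middle equality is the Frobenius invariance property. So $\phi$ respects the left action without any twist.

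For the right action, I would unpack what $R_{\nu_R}$ means — the standard underlying vector space $R$, but with right action $r \ast a := r\,\nu_R(a)$ — and check that $\phi$ converts this twisted right action into the standard right action on $D(R)$. Using Frobenius invariance and the defining property $\langle \nu_R(a), -\rangle_R = \langle -, a\rangle_R$ (which I would first invert to rewrite $\langle x, \nu_R(a)\rangle_R = \langle a, x\rangle_R$), the calculation is
$$\phi(r\,\nu_R(a))(x) = \langle x, r\,\nu_R(a)\rangle_R = \langle xr, \nu_R(a)\rangle_R = \langle a, xr\rangle_R = \langle ax, r\rangle_R = \phi(r)(ax) = (\phi(r)\cdot a)(x).$$

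There is no genuine obstacle; the only hazard is bookkeeping. The key point I would flag is that the Nakayama automorphism is \emph{forced} into exactly the correct slot by the asymmetry between the Frobenius form being left-linear in one argument and right-linear in the other: moving $a$ from one side of $\langle -, -\rangle_R$ to the other costs precisely the twist $\nu_R$, and this is what produces $R_{\nu_R}$ rather than plain $R$ on the source. Being careful to apply $\nu_R$ (not $\nu_R^{-1}$) and to keep the convention for the right action on $D(R)$ consistent is the only thing to watch.
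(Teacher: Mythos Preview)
Your proof is correct. The paper does not prove this lemma in place but defers it (``We will prove a more general version of this statement later''), and the later Proposition~\ref{frobprop} establishes the twisted analogue $R_{\hat b}\cong D(R_{\nu\hat b^{-1}})$ by exactly the same direct verification of the left and right actions via Frobenius invariance and the defining property of $\nu_R$; your argument is the untwisted specialisation of that computation (with the inner product in the other slot, which is immaterial).
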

We will prove a more general version of this statement later. 
\begin{proposition} Suppose $R$, $S$ and $\langle - , - \rangle_R$, $\langle -,- \rangle_S$ obey the grading properties above. Then 
\begin{enumerate} 
\item The Nakayama Automorphisms $R \rightarrow R$, $ S \rightarrow S$ preserve gradings.
\item The morphism $R \otimes^t S \rightarrow D(R\otimes^t S)[(\sigma_S, \sigma_R)]$ is graded.
\end{enumerate} 

\end{proposition}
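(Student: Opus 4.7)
The plan is to reduce both claims to the grading hypothesis on the original inner products, using (1) the factorization of $\nu_R$ as a composition of graded maps, and (2) the multiplicative formula defining $\langle -, - \rangle$ on $R \twisten S$. The key observation throughout is that the twisting scalar $t(r,s')$ never shifts degree — it is a nonzero element of $k$ — so gradings on the twisted tensor product behave exactly as in the untwisted case.

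For part (1), I would write $\nu_R$ as the composition
$$R \xrightarrow{\langle \cdot, - \rangle_R} D(R) \xrightarrow{(\langle -, \cdot \rangle_R)^{-1}} R$$
recorded just before the proposition. The hypothesis says both $\langle \cdot, - \rangle_R$ and $\langle -, \cdot \rangle_R$ are graded maps $R \to D(R)[\sigma_R]$, so each sends $R^a$ into $D(R)^{a + \sigma_R}$. Chasing $r \in R^a$ through the composition, the second arrow sends $D(R)^{a + \sigma_R}$ back into $R^a$, forcing $\nu_R(r) \in R^a$. The argument for $\nu_S$ is identical.

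For part (2), endow $R \twisten S$ with its natural $A \oplus B$-bigrading $(R \twisten S)^{(a,b)} = R^a \otimes S^b$. Take homogeneous $r \in R^a$, $s \in S^b$, and compute the bidegree of $\langle -, r \otimes s \rangle \in D(R \twisten S)$ by identifying the homogeneous components on which it fails to vanish. The defining formula
$$\langle r' \otimes s', r \otimes s \rangle = t(r, s') \langle r', r \rangle_R \langle s', s \rangle_S$$
factors the pairing through the two original inner products, so nonvanishing forces $|r'| = -a - \sigma_R$ and $|s'| = -b - \sigma_S$ by the hypotheses on $\sigma_R$ and $\sigma_S$. Translating back via the convention that $D(R \twisten S)_{(m,n)}$ vanishes outside bidegree $(-m,-n)$, one reads off that $\langle -, r \otimes s \rangle$ sits in bidegree $(a + \sigma_R, b + \sigma_S)$, i.e.\ the map in question is graded with exactly the claimed shift.

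Neither step presents a genuine obstacle; both are bookkeeping arguments. The main thing to be careful of is matching the paper's ordering conventions on the bigrading shift and on the degrees of $D(-)$, and confirming that the scalar $t(r,s')$, being invertible, does not alter the support of the pairing.
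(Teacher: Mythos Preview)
Your proposal is correct and essentially identical to the paper's own argument: for part (1) the paper also factors $\nu_R$ as the composite of the two graded maps $R \to D(R)[\sigma_R]$ and observes the shifts cancel, and for part (2) the paper likewise expands $\langle r\otimes s, a\otimes b\rangle = t(a,s)\langle r,a\rangle_R\langle s,b\rangle_S$ and reads off the support. Your flagged caution about the ordering of the shift is well-placed --- both your computation and the paper's proof yield the shift $(\sigma_R,\sigma_S)$, so the $(\sigma_S,\sigma_R)$ in the statement appears to be a typo.
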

\begin{proof}[Proof of 1] 
We first verify that both morphisms
$$R \mapsto D(R)[\sigma_R]$$
$$ r \mapsto \langle r, - \rangle, \; \; r \mapsto \langle - , r \rangle$$
are graded linear maps. But this is just the statement that
$$\langle x, r \rangle_R \not=0 \implies |x|+|r|+\sigma_R=0, \;\;  \langle r,x \rangle_R \not=0 \implies |x|+|r|+\sigma_R=0$$
which are exactly the assumptions on the map $R \otimes R \rightarrow k$. So then obviously we have morphisms
$$R \xrightarrow{\langle \cdot, - \rangle_R} D(R)[\sigma_R] \xrightarrow{\langle -, \cdot \rangle_R^{-1}} R,$$
as inverting the second morphism reverses the grading shift. The Nakayama automorphism is this composition, which is clearly graded.
\end{proof}
\begin{proof}[Proof of 2]
We know for $r, a \in R$, $s, b \in S$ homogenous that 
$$\langle r \otimes s, a \otimes b \rangle = t(a,s)\langle r, a \rangle_R \langle s, b \rangle_S.$$
Now, $\langle r, a \rangle_R$ vanishes when $|a| \not = -|r|-\sigma_R$, and likewise $\langle s, b \rangle_S=0$ when $|b|\not=-|s|-\sigma_S$. So the inner product vanishes when $| a\otimes b| \not = (-|r| - \sigma_R, -|s| - \sigma_S)$, exactly as required.
\end{proof}
\subsection{Towards Diagonalisability}
We aim to show that if $R$ and $S$ admit semisimple Nakayama Automorphisms, then $R \otimes S$ also admits a semisimple Nakayama Automorphism. We recall that a morphism $\phi: V \rightarrow V$ of $k$-vector spaces is semisimple when
$$\bar{\phi}: V \otimes_k \bar{k} \rightarrow V \otimes_k \bar{k}$$ is diagonalisable, where $\bar{k}$ is an algebraic closure of $k$. We first prove the following Lemma.
\begin{lemma} Given $R$, $S$ $k$-algebras graded by abelian groups $A$ and $B$ respectively, with a twisting morphism $t: A \otimes B \rightarrow k^\times$,
\begin{enumerate}
\item $(R \otimes_k \bar{k}) \otimes_{\bar{k}} (S \otimes_k \bar{k}) \cong (R \otimes_k^t S) \otimes_k \bar{k}$  as vector spaces
\item Letting $\bar{t}$ denote the composition $\bar{t}: A\otimes B \rightarrow k^\times \rightarrow \bar{k}^\times$ we have
$$( R \otimes_k \bar{k}) \otimes^{\bar{t}}_k (S \otimes_k \bar{k}) \cong (R \otimes^t_k S)\otimes_k \bar{k}$$
as algebras, where $R\otimes \bar{k}$ and $S \otimes \bar{k}$ are graded in the obvious ways.
\end{enumerate}
\end{lemma}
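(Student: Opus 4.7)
The plan is to build the isomorphism as an enhanced version of the standard base-change isomorphism for tensor products, tracking how the twist behaves under base extension.

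For part (1), I would exploit the fact that the twisted tensor product $R \otimes_k^t S$ has the same underlying $k$-vector space as the usual tensor product $R \otimes_k S$. Thus I only need the standard vector space identity $(V \otimes_k \bar k) \otimes_{\bar k} (W \otimes_k \bar k) \cong (V \otimes_k W) \otimes_k \bar k$, valid for any pair of $k$-vector spaces $V, W$, applied to $V = R$ and $W = S$. Explicitly I would define
$$\varphi \colon (R \otimes_k \bar k) \otimes_{\bar k} (S \otimes_k \bar k) \longrightarrow (R \otimes_k^t S) \otimes_k \bar k$$
by $\varphi\bigl((r \otimes \lambda) \otimes_{\bar k} (s \otimes \mu)\bigr) = (r \otimes s) \otimes \lambda \mu$, and note that its well-definedness and bijectivity follow by choosing $k$-bases of $R$ and $S$ and checking that $\varphi$ sends one evident $\bar k$-basis to another.

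For part (2), I would upgrade the same map $\varphi$ to an algebra isomorphism, which reduces to a direct compatibility check on homogeneous elementary tensors. On the source, the multiplication of $(r \otimes \lambda) \otimes (s \otimes \mu)$ with $(r' \otimes \lambda') \otimes (s' \otimes \mu')$ produces the scalar $\bar t(|r'|,|s|)$ times $(rr' \otimes \lambda\lambda') \otimes (ss' \otimes \mu\mu')$, whose image under $\varphi$ is $\bar t(|r'|,|s|)\cdot (rr' \otimes ss') \otimes \lambda\lambda'\mu\mu'$. On the target, the product of the images is $t(|r'|,|s|) (rr' \otimes ss') \otimes \lambda\mu\lambda'\mu'$. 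Since $\bar t$ is by definition the composite of $t$ with the inclusion $k^\times \hookrightarrow \bar k^\times$, these expressions coincide, so $\varphi$ is an algebra map. Units are preserved by inspection, and bijectivity is inherited from part (1).

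There is no real obstacle here; the essential content is that the twist factor $t(|r'|,|s|)$ is a scalar living in $k^\times \subset \bar k^\times$, so extending scalars does not interact nontrivially with it beyond the tautological inclusion. The only thing requiring care is the bookkeeping of where the scalars $\lambda, \mu$ sit relative to the twist when rewriting elementary tensors, but since each $R \otimes_k \bar k$ and $S \otimes_k \bar k$ is graded in the obvious way (with $r \otimes \lambda$ of degree $|r|$), the twisting map $\bar t$ acts on the same degree data as $t$, and the identification is immediate.
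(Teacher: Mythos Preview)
Your proposal is correct and follows essentially the same approach as the paper: the same explicit map $(r\otimes\lambda)\otimes(s\otimes\mu)\mapsto (r\otimes s)\otimes\lambda\mu$ for part (1), and the same direct multiplicativity check on homogeneous tensors for part (2). The only cosmetic difference is that the paper verifies well-definedness and bijectivity in part (1) by writing down an explicit inverse and checking the $\bar k$-balancing relation, whereas you appeal to bases; both are routine.
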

\begin{proof}[Proof of 1.]

We have a morphism from the right to the left, for $r \in R$, $s\in S$ and $\lambda, \mu \in \bar{k}$
$$ (r \otimes \lambda) \otimes (s \otimes \mu) \mapsto r \otimes s \otimes \lambda\mu$$
with inverse
$$(r \otimes s )\otimes \lambda  \mapsto (r \otimes \lambda) \otimes (s \otimes 1).$$
We must check that the first morphism is well defined. First we note that choosing $\alpha \in \bar{k}$
\begin{center}
\begin{tikzcd}
( r \otimes \lambda \alpha ) \otimes ( s \otimes \mu) \arrow[r, mapsto] \arrow[d, equal] & r \otimes s \otimes \lambda\mu\alpha \arrow[d, equal]  \\
 (r \otimes \lambda) \otimes (s \otimes \mu \alpha) \arrow[r, mapsto] & r \otimes s \otimes \mu \lambda \alpha.
 \end{tikzcd}
 \end{center}
So that this morphism is in fact well defined. These morphisms are obviously mutually inverse, so that we are done. 
\end{proof}
\begin{proof}[Proof of 2.] We must check the vector space isomorphism is an algebra morphism. Take $r,r' \in R$, $s,s' \in S$ homogeneous, and $\lambda, \mu, \lambda', \mu' \in \bar{k}$
\begin{align*}
(r \otimes \lambda) \otimes (s \otimes \mu) \times (r' \otimes \lambda') \otimes (s' \otimes \mu') &= t'(r',s) (rr' \otimes \lambda\lambda') \otimes (ss' \otimes \mu\mu') \\
&\mapsto (t(r',s) rr' \otimes ss') \otimes \lambda \lambda' \mu \mu' \\
&= ((r \otimes s) \otimes \lambda\mu) \times ((r' \otimes s') \otimes \lambda'\mu').
\end{align*}

So our isomorphism respects multiplication. 
\end{proof}
\begin{remark} Note that all gradings commute with the tensor product; for $R$ an $A$-graded $k$-algebra, $R\otimes_k \bar{k}$ becomes an $A$-graded $\bar{k}$-algebra. It is also easy to see that $R \otimes_k \bar{k}$ remains Frobenius, by writing down a non-degenerate invariant form. 
\end{remark}
So to show that $R \otimes^t S$ admits a semisimple Nakayama Automorphism, it suffices to show that for $R$, $S$, admitting diagonalisable Nakayama Automorphisms, $R\otimes^t S$ admits a diagonalisable Nakayama Automorphism. \\[5pt]
Before stating our final proposition of the section, we briefly recap the properties of adjoints. Given a vector space $V$ admitting a non degenerate bilinear form $\langle -, - \rangle$, and an endormorphism $T: V\rightarrow V$, the adjoint of $T$ written $T^*$ is the linear map with the property that 
$$\langle Ta, b \rangle = \langle a, T^*b \rangle$$
which can be described explicitly by the composition
$$V \xrightarrow{\langle -, \cdot \rangle} D(V) \xrightarrow{\cdot T} D(V) \xrightarrow{(\langle -, \cdot \rangle)^{-1}} V$$
For $V$ finite dimensional, should we have $\{e_1,\cdots, e_n\}$ a basis of eigenvectors for $T$, with eigenvectors $\{\lambda_1, \cdots, \lambda_n\}$, whose (right) dualizing basis is $\{\epsilon_1, \cdots, \epsilon_n\}$, then $T^*$ will act on $\epsilon_i$ by $\lambda_i$. To see this, note that we have
$$\begin{aligned} \langle e_j, T^*\epsilon_i \rangle = &\langle Te_j, \epsilon_i \rangle\\
= &\lambda_j \langle e_j, \epsilon_i \rangle \\
= & \lambda_j \delta_{i,j} \\
= &\lambda_i \delta_{i,j} = \langle e_j, \lambda_i\epsilon_i \rangle,\end{aligned} $$
so that, by non-degeneracy of our inner product, $T^*\epsilon_i = \lambda_i \epsilon_i$. 
\begin{proposition} Suppose $R$ and $S$ are Frobenius Algebra, with the grading assumptions above. We have:
$$\nu(a \otimes b) = t(|a|, \sigma_S)t(\sigma_R, |b|)^{-1} \nu_R(a) \otimes \nu_S(b),$$
where we have written $\nu$ for $\nu_{R\twisten S}$.
\end{proposition}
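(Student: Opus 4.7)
The plan is to exploit the defining property of the Nakayama automorphism: since $\langle -, - \rangle$ on $R \twisten S$ is non-degenerate, $\nu$ is the unique $k$-linear map satisfying $\langle x, y \rangle = \langle y, \nu(x) \rangle$ for all $x, y$. It therefore suffices to verify this identity after substituting the proposed formula, taking $x = r \otimes s$ and $y = r' \otimes s'$ homogeneous, and running both sides through the explicit definition of the twisted inner product.

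Expanding each side with the formula $\langle r \otimes s, r' \otimes s' \rangle = t(r', s)\langle r, r'\rangle_R \langle s, s'\rangle_S$, the left-hand side is $t(r', s)\langle r, r'\rangle_R \langle s, s'\rangle_S$, while the right-hand side becomes $t(|r|, \sigma_S) t(\sigma_R, |s|)^{-1} t(\nu_R(r), s') \langle r', \nu_R(r) \rangle_R \langle s', \nu_S(s)\rangle_S$. The defining identities of $\nu_R$ and $\nu_S$ turn the bilinear factors on the right into precisely those on the left, so the entire claim reduces to the scalar identity
\[
t(r', s) \;=\; t(|r|, \sigma_S)\, t(\sigma_R, |s|)^{-1}\, t(\nu_R(r), s'),
\]
which needs to hold only in the cases where $\langle r, r'\rangle_R$ and $\langle s, s'\rangle_S$ are both nonzero, since both sides otherwise vanish.

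The heart of the argument is this scalar comparison, where the grading hypothesis is essential. Non-vanishing of the $R$- and $S$-inner products forces $|r'| = -|r| - \sigma_R$ and $|s'| = -|s| - \sigma_S$. Using that $\nu_R$ preserves grading (by the preceding proposition), so that $|\nu_R(r)| = |r|$, and that $t$ is a group homomorphism with $t(-a, b) = t(a, b)^{-1}$, both $t(r', s)$ and $t(\nu_R(r), s')$ split into products involving $t(|r|, |s|)^{\pm 1}$, $t(\sigma_R, |s|)^{\pm 1}$, and $t(|r|, \sigma_S)^{\pm 1}$. The main obstacle is the bookkeeping of which factor carries which exponent; once it is unfolded, the spurious $t(|r|, |s|)$ terms cancel and the identity reduces to a tautology. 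Non-degeneracy of $\langle -, - \rangle$ then promotes this pointwise verification to the uniqueness of $\nu$, yielding the stated formula.
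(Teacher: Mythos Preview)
Your argument is correct. The scalar bookkeeping works exactly as you outline: with $|r'|=-|r|-\sigma_R$ and $|s'|=-|s|-\sigma_S$, one has $t(r',s)=t(|r|,|s|)^{-1}t(\sigma_R,|s|)^{-1}$ and $t(\nu_R(r),s')=t(|r|,|s|)^{-1}t(|r|,\sigma_S)^{-1}$, so the right-hand scalar $t(|r|,\sigma_S)\,t(\sigma_R,|s|)^{-1}\,t(\nu_R(r),s')$ collapses to $t(|r|,|s|)^{-1}t(\sigma_R,|s|)^{-1}$, matching the left. Non-degeneracy then pins down $\nu$.

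Your route differs from the paper's in organisation. The paper first computes $\langle a\otimes b, c\otimes d\rangle = \langle T(c\otimes d), \nu_R(a)\otimes\nu_S(b)\rangle$ for the diagonal operator $T(c\otimes d)=\frac{t(c,b)}{t(a,d)}\,c\otimes d$, recognises that this forces $\nu(a\otimes b)=T^*(\nu_R(a)\otimes\nu_S(b))$, and then computes the adjoint $T^*$ via dualizing bases and the grading constraint. You instead take the target formula as given and verify the defining identity of $\nu$ directly, bypassing the adjoint formalism entirely. Your approach is shorter and more self-contained; the paper's has the advantage of \emph{deriving} the scalar $t(|a|,\sigma_S)t(\sigma_R,|b|)^{-1}$ rather than checking it after the fact, which may feel less like a rabbit pulled from a hat. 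Both rely on the same two ingredients---the individual Nakayama identities and the grading shifts $\sigma_R,\sigma_S$---so the difference is one of packaging rather than substance.
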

\begin{proof}
Take $a, c \in R$, $b,d \in S$ homogeneous. Writing $\nu_R, \nu_S$ for the Nakayama Automorphisms of $R,S$ respectively we have
\begin{align*}
\langle a \otimes b, c \otimes d\rangle &= t(c,b) \langle a ,c \rangle_R \langle b,d \rangle_S \\
&=t(c,b) \langle c, \nu_R(a) \rangle_R \langle d, \nu_S(b) \rangle \\
&= \frac{t(c,b)}{t(a, d)} \langle c\otimes d, \nu_R(a) \otimes \nu_S(b) \rangle,
\end{align*}
where on the last line we used the fact that $\nu_R$ is homogeneous. So we see that
$$\langle a \otimes b , -  \rangle = \left\langle\frac{t(-,b)}{t(a, -)} -, \nu_R(a) \otimes \nu_S(b)\right\rangle$$
where the morphism $T:=\frac{t(-,b)}{t(a, -)}-$ maps the homogeneous element 
$$ c \otimes d \mapsto \frac{t(c,b)}{t(a,d)} c \otimes d.$$
So, we see clearly that\footnote{Here crucially $T$ depends on $a$ and $b$.}
$$\nu (a\otimes b) = T^* (\nu_R(a) \otimes \nu_S(b))$$
Now, we know that $T$ is diagonalised by any basis of homogeneous elements. We also know that, should a homogeneous element $r \otimes s$ be part of some homogeneous basis $\mathcal{B}$, then the corresponding element of the (left) dualizing basis $x$ will also be homogeneous, and have
$$ \langle x, r\otimes s \rangle \not =0$$
So that, $|x|_A = -|r| -\sigma_R$ and $|x|_B =-|s| - \sigma_S$. From our discussion earlier, we know that $T^*$ will act on $r\otimes s$ via the scalar $\frac{t(|x|_A,b)}{t(a,|x|_B)}$. That is, for $r, s$ homogeneous,
$$T^*( r \otimes s) = \frac{t(-|r|-\sigma_R, |b|)}{t(|a|, -|s|-\sigma_S)} r \otimes s$$
which finally tells us that

\begin{align*}
\nu(a \otimes b) &= \frac{t(-|a|-\sigma_R, |b|)}{t(|a|, -|b|-\sigma_S)} \nu_R(a) \otimes \nu_S(b) \\
&= t(|a|, \sigma_S)t(\sigma_R, |b|)^{-1} \nu_R(a) \otimes \nu_S(b)
\end{align*}
as required.
\end{proof}
\begin{corollary} Suppose $R$ and $S$ satisfy the grading criteria, and further have diagonalisable Nakayama Automorphisms. Then the Nakayama Automorphism of $R\otimes^t S$ is diagonalisable.
\end{corollary}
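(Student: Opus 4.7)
The plan is to leverage the explicit formula from the previous proposition on a homogeneous eigenbasis of each factor. Since $\nu_R$ is graded (Part 1 of the previous proposition), it preserves each finite-dimensional homogeneous component $R^a$; a diagonalisable graded endomorphism admits a homogeneous eigenbasis, because its minimal polynomial is squarefree, hence so is the minimal polynomial of each restriction $\nu_R|_{R^a}$, so each restriction is itself diagonalisable. I therefore obtain a homogeneous $k$-basis $\{e_i\}$ of $R$ with $\nu_R(e_i)=\lambda_i e_i$, and similarly a homogeneous basis $\{f_j\}$ of $S$ with $\nu_S(f_j)=\mu_j f_j$.

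Then $\{e_i \otimes f_j\}_{i,j}$ is a basis of $R \otimes^t S$ consisting of $(A\oplus B)$-homogeneous elements, and applying the formula of the previous proposition gives
$$\nu(e_i \otimes f_j) = t(|e_i|,\sigma_S)\,t(\sigma_R,|f_j|)^{-1}\,\lambda_i\mu_j\,(e_i\otimes f_j),$$
exhibiting each basis element as an eigenvector of $\nu$ with eigenvalue in $k$. Hence $\nu$ is diagonalisable, as desired.

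There is no serious obstacle here: all of the real content was absorbed by the previous proposition and by the grading hypothesis. The only mildly subtle step is the reduction from diagonalisability of $\nu_R$ on the whole of $R$ to diagonalisability on each homogeneous piece, which is what secures a \emph{homogeneous} eigenbasis and hence allows the formula to be applied cleanly; this is a standard minimal-polynomial argument. If in the overarching setting one begins only with $\nu_R,\nu_S$ semisimple rather than diagonalisable, the base-change lemma from earlier in the section reduces the problem to the diagonalisable case over $\bar{k}$, after which the argument above applies verbatim.
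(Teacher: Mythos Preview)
Your proposal is correct and follows essentially the same approach as the paper: both use the minimal-polynomial argument to pass from diagonalisability of $\nu_R$ to diagonalisability of its restriction to each homogeneous piece, obtain a homogeneous eigenbasis, and then invoke the explicit formula of the preceding proposition to conclude. Your write-up is slightly more explicit in spelling out the eigenvalue on each $e_i\otimes f_j$, but the logical content is the same.
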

\begin{proof}
The grading assumptions show that the Nakayama Automorphisms restrict to each graded piece. Recall that a linear map is diagonalisable exactly when its minimal polynomial has no repeated factors. We see that the minimal polynomial of a linear map restricted to a subspace will always divide the original minimal polynomial, so the restriction of a diagonalisable morphism to an invariant subspace remains diagonalisable. So then, each restriction of the Nakayama Automorphism admits a diagonalising homogeneous basis, which will clearly be a diagonalising basis for the twisted Nakayama Automorphism.
\end{proof}
\section{New information from old}
\subsection{Modules}
The objective of our study is primarily to transport well known information about the categories $R\bimods$ and $S\bimods$, into $R \twisten S \bimods$. A first step is the creation of a functor between $R\bimods \times S\bimods \rightarrow R \twisten S\bimods$. Crucially, for compatibility with our twisting, we must have some notion of grading of our modules. An $R$-module $M$ is graded when it admits a decomposition
$$M = \bigoplus_{a \in A} M^a$$
for which with $r\in R^a$ and $m \in M^b$ $rm \in M^{a+b}$. From here on, all modules are assumed to be graded, as are all morphisms.

\begin{defn}Given $M \in R\bigmods$, $N \in S\bigmods$, we denote $M \twisten N \in R \twisten S \bigmods$ for the module with underlying vector space $M \ten N$, with action given as follows. For $r \in R$, $s \in S$, $m \in M$ and $n \in N$ all homogeneous, 
$$(r \otimes s) \cdot(m \otimes n) = t(|m|,|s|)(rm \otimes sn)$$
$$ (m \otimes n)\cdot(r \otimes s) = t(|r|,|n|)(mr \otimes ns)$$
It is for example easy to verify that the bimodule structure of $R \twisten S$ agrees with this prescription.
\end{defn}
For a functor, we further require a translation of morphisms. 
\begin{proposition}
Given morphisms $M \xrightarrow{f} M' \in R\bigmods$ and $N \xrightarrow{g} N' \in S\bigmods$ (which we recall must be graded), we have the morphism
$$M \twisten N \xrightarrow{f \twisten g} M' \twisten N',$$
which is the same on objects as the usual morphism $f \twisten g$. This is a morphism of $R \twisten S$-bimodules.

\end{proposition}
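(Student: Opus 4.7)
The plan is to verify directly that $f\twisten g$ intertwines the left and right actions of $R\twisten S$, exploiting the fact that $f$ and $g$ are graded. Since $f\twisten g$ is just $f\otimes g$ on underlying vector spaces, well-definedness as a linear map is immediate; the content lies in checking bimodule compatibility on homogeneous simple tensors and then extending by bilinearity.

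First I would treat the left action. For $r\in R$, $s\in S$, $m\in M$, $n\in N$ homogeneous, unfolding the definition of the twisted action gives
\[(f\twisten g)\bigl((r\otimes s)\cdot(m\otimes n)\bigr)=t(|m|,|s|)\,f(rm)\otimes g(sn).\]
Because $f$ and $g$ are honest bimodule morphisms, this equals $t(|m|,|s|)\,rf(m)\otimes sg(n)$. On the other hand,
\[(r\otimes s)\cdot(f\twisten g)(m\otimes n)=t(|f(m)|,|s|)\,rf(m)\otimes sg(n).\]
The two sides agree precisely because $f$ is graded, so $|f(m)|=|m|$, and the twisting coefficients match. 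This is the only spot where the graded hypothesis enters, and I expect it to be the only real subtlety in the argument.

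The right action is entirely parallel: expanding $(m\otimes n)\cdot(r\otimes s)=t(|r|,|n|)\,mr\otimes ns$, applying $f\twisten g$, and using that $g$ is graded (so that $|g(n)|=|n|$) shows
\[(f\twisten g)\bigl((m\otimes n)\cdot(r\otimes s)\bigr)=t(|r|,|n|)\,f(m)r\otimes g(n)s=(f\twisten g)(m\otimes n)\cdot(r\otimes s).\]
Finally I would note that both sides of each identity are bilinear in $(m,n)$ and in $(r,s)$, so extending from homogeneous elements to all of $M\twisten N$ and $R\twisten S$ by linearity finishes the proof. No machinery beyond the definitions and the grading condition on morphisms is required; the entire argument is essentially a bookkeeping check that the twisting coefficients on the two sides of each compatibility equation are identical.
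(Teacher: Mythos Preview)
Your proof is correct. The paper actually states this proposition without proof, so there is nothing to compare against; your direct verification on homogeneous simple tensors, using that graded morphisms preserve degree so the twisting coefficients match, is exactly the expected argument.
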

\subsection{Resolutions}
Given projective bimodule resolutions $P_\bullet \rightarrow R$, and $Q_\bullet \rightarrow S$, it is known (see \cite{Witherspoon}) that $\operatorname{Tot}(P_\bullet \twisten Q_\bullet)$ is a projective bimodule resolution of $R \twisten S$. The resolutions of interest to us are the reduced and unreduced Bar Resolutions, which might be described as follows. Write $\bar{R} = R / (k \cdot 1_R)$ 
$$B_n(R) := \bar{R}^{\otimes n} \hspace{20pt} \tilde{B}_n(R) := R^{\otimes n}$$
and write $[r_1|\cdots |r_n] := r_1 \ten \cdots \ten r_n \in B_n(R)$, following the notation of \cite{BriggsWitherspoon}. We have\footnote{ We will focus on $B_n(R)$, the reduced resolution as we use it more frequently; we will only need the unreduced resolution to construct comparison maps.}
$$b_R:B_n (R) \rightarrow B_{n-1}(R), \; \; b_R([r_1|\cdots|r_n]) = \sum_{i=1}^{n-1} (-1)^i[r_1| \cdots |r_i r_{i+1}| \cdots | r_n]$$

Write $\mathbb{B}(R)$ for the resolution given by $\mathbb{B}(R)_n = R \otimes B_n R \otimes R$, with differential defined $R$-bilinearly by
$$\delta_R(1 \otimes [r_1 |\cdots | r_n] \otimes 1) = r_1 \otimes [r_2 | \cdots |r_n] \otimes 1 +  1 \ten b_R([r_1|\cdots | r_n])\ten 1 +(-1)^n 1 \ten [r_1 | \cdots |r_{n-1}] \ten r_n$$
and write $\mathbf{B}(R)$ for the resolution $\mathbf{B}(R)_n = R \otimes \tilde{B}_n(R) \otimes R$, with similar differential. We have similarly, $B(S)$, $\tilde{B}(S)$, $\mathbb{B}(S)$ and $\mathbf{B}(S)$ wherein identical differentials are used. By the previous discussion, $\mathbb{B}(R) \otimes^t \mathbb{B}(S)$ is an $R \twisten S$-bimodule projective resolution of $R \twisten S$, and moreover, from \cite{BriggsWitherspoon} we have bimodule isomorphisms
$$\mathbb{B}(R) \otimes \mathbb{B}(S) \cong (R \twisten S) \otimes BR \otimes BS \otimes (R \twisten S)$$
$$(1 \otimes [r_1 | \cdots | r_n] \ten 1) \ten (1 \ten [s_1 |\cdots |s_m] \ten 1) \mapsto (1 \ten 1) \otimes [r_1 | \cdots | r_n] \ten [s_1 |\cdots |s_m] \ten (1 \otimes 1)$$
These resolutions were used to great effect in $\cite{BriggsWitherspoon}$ to make explicit both the product and the Gerstenhaber Bracket, in terms of the operations solely on the Hochschild Cohomologies of $R$ and $S$.
\subsection{Hochschild Cohomology}
We are now nearly able to state the main result motivating our work. Firstly we will solidify some notation that will be crucial later. Recall from the previous section, that we were given $k$-algebras $R$ and $S$ admitting respectively a grading by abelian groups $A$, $B$. We are further given a twisting map $t: A \otimes B \rightarrow k^{\times}$. Following the notation of \cite{BriggsWitherspoon} and writing $\hat{A} = \homo_{\textbf{Ab}} (A, k^\times)$ for the group of linear characters of $A$, we note that $\hat{A}$ acts on $R$. Given $\lambda \in \hat{A}$ and $r \in R$ homogeneous, we can define $\lambda \cdot r = \lambda(|r|) r$. Most crucially we note that
$$t \in \homo_{\textbf{Ab}}(A \otimes B, k^{\times}) \cong \homo_{\textbf{Ab}}(B, \hat{A})$$
So to each $b \in B$ we can associate $\hat{b} = t(-,b)$, which acts on $R$ by the above formula. The same of course holds with $B$ and $A$ swapped. \\

Continuing on from the last section, we have, 
$$\begin{aligned} C_t^*(R \twisten S, R \twisten S) &:=  \homo_{R \twisten S ^e} ( (R \twisten S) \otimes BR \otimes BS \otimes (R \twisten S), R \twisten S)\\& \cong \homo_k(BR \otimes BS, R \otimes S)\end{aligned}$$
and we further have the following morphism:
$$\homo_k(BR, R) \otimes \homo_k(BS, S) \xrightarrow{- \boxtimes -} \homo_k(BR \otimes BS, R \otimes S)$$
$$(f \boxtimes g)([r_1 | \cdots | r_n] \otimes \els) =(-1)^{mn}f([r_1 | \cdots | r_n]) \otimes g(\els)$$
for $f \in \homo_k (BR, R)$, $g \in \homo_k(BS, S)$. This map is clearly an injection and as everything in sight is finite dimensional comparing dimensions ensures that our map is an isomorphism, which further preserves the $A \oplus B$ grading. Comparing graded pieces and verifying that our map is in fact a chain morphism \cite{BriggsWitherspoon} we obtain the following theorem:
\begin{theorem}\label{TwistedTensorTheorem} With $R$, $S$, $A$ and $B$ as above

$$\HH^*(R \twisten S, R \twisten S) \cong \bigoplus_{a \in A, b \in B} \HH^*(R, R_{\hat{b}})^a \otimes \HH^*(S, \prescript{}{\hat{a}}{S})^b$$

of graded vector spaces.
\end{theorem}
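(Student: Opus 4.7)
The plan is to bootstrap directly from the discussion immediately preceding the statement. We already have the identification
$$C^*_t(R\twisten S, R\twisten S) \cong \homo_k(BR \otimes BS, R \otimes S),$$
inherited from the resolution $\mathbb{B}(R)\otimes^t\mathbb{B}(S)$, together with the injective map $\boxtimes$ from $\homo_k(BR,R)\otimes \homo_k(BS,S)$. Since everything in sight decomposes under the $A\oplus B$-grading (both $BR$, $BS$, and $R$, $S$ are graded, and the twisting preserves the grading), I would first refine the isomorphism above into its graded pieces:
$$\homo_k(BR\otimes BS,R\otimes S) \;\cong\; \bigoplus_{a\in A,\,b\in B}\homo_k(BR,R)^a \otimes \homo_k(BS,S)^b,$$
by bi-degree counting in source and target, exactly as in the finite-dimensional argument already alluded to in the excerpt.

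The heart of the matter is to identify each graded piece with the twisted-coefficient complex of the factor algebras. For $f\in\homo_k(BR,R)^a$ and $g\in\homo_k(BS,S)^b$, I would compute the restriction of the total differential on $\homo_k(BR\otimes BS,R\otimes S)$ along $\boxtimes$. Because the bimodule action of $R\twisten S$ on itself carries the twisting factors $t(|r'|,|s|)$ from the twisted product, the outer boundary terms of $\delta$ produce, on the $R$-factor, the differential of $f$ computed not with values in $R$ but with values in $R_{\hat b}$ (the twist comes from sliding the $S$-part $s$ of degree $b$ past the $R$-part), and symmetrically on the $S$-factor, values in $\prescript{}{\hat a}{S}$. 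Concretely, I would write out the Koszul-signed expression $(-1)^{mn} f\otimes g$ and track the twisting coefficient picked up at each step, using the formulas in the Definition of $M\twisten N$. This shows that $\boxtimes$ restricts to a chain isomorphism
$$\boxtimes: C^*(R,R_{\hat b})^a \otimes C^*(S,\prescript{}{\hat a}{S})^b \xrightarrow{\;\sim\;} \bigl(\homo_k(BR\otimes BS, R\otimes S)\bigr)^{(a,b)}.$$

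Once this bookkeeping is in place, the theorem follows by applying the Künneth formula over a field to the right-hand side (the factors being complexes of $k$-vector spaces), and passing to cohomology. The only genuine obstacle is the \emph{chain map verification}: one has to make sure every instance where an element must be commuted past another in the twisted tensor product is absorbed precisely by the character twisting in $R_{\hat b}$ or $\prescript{}{\hat a}{S}$, and that the sign $(-1)^{mn}$ in the definition of $\boxtimes$ combines correctly with the Koszul signs coming from the two bar differentials. Since the underlying computation is essentially the one performed in \cite{BriggsWitherspoon}, I would invoke that reference for the detailed bracket/product bookkeeping and content myself with isolating the grading-and-twisting compatibility, which is new only insofar as our grading hypothesis makes the pieces well-defined.
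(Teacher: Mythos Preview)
Your proposal is correct and follows essentially the same route the paper takes: the paper itself does not give an independent proof but sketches exactly this argument (the $\boxtimes$ map is an isomorphism of $A\oplus B$-graded vector spaces by dimension count, and one then compares graded pieces and checks it is a chain map) before deferring the verification to \cite{BriggsWitherspoon}. Your explicit mention of the K\"unneth theorem to pass to cohomology is the one detail the paper leaves implicit, but otherwise the strategies coincide.
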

Then Briggs and Witherspoon proceed to show that this is in fact an isomorphism of Gerstenhaber Algebras, giving a Gerstenhaber Structure on the right hand side by defining a notion of \emph{twisted circle product} on the complexes $C^*(R,R_{\hat{b}})$ and $C^*(S, \prescript{}{\hat{a}}S)$. In the coming sections we will endeavour to demonstrate corresponding \emph{twisted $\bv$ operators} on the same complexes.

\section{The BV operator}
In this section we will study constructions of the $\bv$-operator, and describe the twisted constructions that show up in our main theorem. Before we begin discussing these, we must define exactly what we mean by a $\bv$-operator. 
\begin{defn} A Gerstenhaber algebra over a field $k$ is a triple $(\mathcal{H}, \smile, [-,-])$ with the following properties. $\mathcal{H} = \oplus_{n\in\mathbbm{N}}\mathcal{H}^n$ is a graded $k$-vector space, equipped with graded maps
$$\smile: \mathcal{H} \otimes \mathcal{H} \rightarrow \mathcal{H}$$
$$[-,-]: \mathcal{H} \otimes \mathcal{H} \rightarrow \mathcal{H}[-1]$$
the latter of which is called the Gerstenhaber bracket. These obey
\begin{enumerate}[label=(\roman*)]
\item $(\mathcal{H}, \smile)$ is a graded commutative algebra with unit.
\item For $f,g,h \in \mathcal{H}$ homogeneous elements
$$ [f,g] = -(-1)^{(|f|-1)(|g|-1)}[g,h]$$
$$0= (-1)^{(|f|-1)(|h|-1)}[[f,g],h] + (-1)^{(|g|-1)(|f|-1)}[[g,h],f] + (-1)^{(|h|-1)(|g|-1)}[[h,f],g]$$
\item For each $f \in \mathcal{H}$ the map $[f,-]$ is a graded derivation on the algebra $(\mathcal{H}, \smile)$.
\end{enumerate}
\end{defn}
\begin{defn} A Batalin-Vilkovisky ($\bv$) algebra is a quadruple $(\mathcal{H}, \smile, [-,-], \Delta)$ with the following properties. $(\mathcal{H}, \smile, [-,-])$ is Gerstenhaber algebra, and $\Delta:\mathcal{H} \rightarrow \mathcal{H}[-1]$ is an operator which squares to zero. For $f,g \in \mathcal{H}$ homogeneous, $\Delta$ also satisfies the following equation
$$[f,g] = (-1)^{|f|+1}(\Delta(f \smile g) - \Delta(f) \smile g - (-1)^{|f|} f \smile \Delta(g)).$$
So then, the Gerstenhaber bracket is completely determined by the product structure and the $\bv$-operator.
\end{defn}
\subsection{Tamarkin-Tsygan Calculi} In this section we will briefly recall the construction of a Tamarkin-Tsygan Calculus that is used to define the $\bv$-operator on Hochschild Cohomology in \cite{Lambre}. Though the precise details are not crucial, our twisted $\bv$-operators will arise from this construction. \\[5pt]
Let $A$ be an associative finite dimensional algebra, and $\nu:A \rightarrow A$ be an algebra automorphism. Recall that $C_p(A, M):= M \otimes_{A^e} \mathbb{B}(A)_p$. We will denote chains as follows:
$$(m, a_1, \cdots, a_p) := m \otimes_{A^e} (1 \otimes a_1 \otimes \cdots \otimes a_p \otimes 1)$$
with this notation fixed, define  $\beta_\nu:C_p(A, A_\nu) \rightarrow C_{p+1}(A,A_\nu)$
$$(a_0, a_1, \cdots, a_n) \mapsto \sum_{i=0}^p (-1)^{ip} (1 , a_i , \cdots , a_p , a_0 , \nu(a_1) , \cdots , \nu(a_{i-1}))$$
and let $T:C_p(A, A_\nu) \rightarrow C_p(A, A_\nu)$ denote the morphism
$$(a_0, a_1, \cdots, a_p) \mapsto (\nu(a_0), \nu(a_1), \cdots, \nu(a_p)).$$
Then from \cite{Lambre} we have the following proposition.
\begin{proposition} Writing $\delta$ for the differential on $C_*(A, A_\nu)$, we have
\begin{equation}\label{betaDiff} \delta \beta_\nu + \beta_\nu \delta = 1-T \end{equation}
\end{proposition}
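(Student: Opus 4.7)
The plan is to prove the identity by direct computation on chains, mirroring the classical Connes identity $\delta B + B\delta = 1$ but accounting for the twist $\nu$. The claim essentially says that $\beta_\nu$ is a chain homotopy from $T$ to the identity (up to the usual shift in homological degree), which is precisely what is needed to deduce that $T$ acts as the identity on Hochschild homology with coefficients in $A_\nu$.

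First I would write out the Hochschild differential $\delta$ on $C_p(A, A_\nu)$ explicitly in the notation $(a_0, a_1, \ldots, a_p)$, being careful with the last face, which carries the twist: it contributes $(-1)^p(\nu(a_p) a_0, a_1, \ldots, a_{p-1})$ (or the appropriate symmetric variant), reflecting the bimodule structure on $A_\nu$. All other faces are the usual multiplications $(a_0, \ldots, a_j a_{j+1}, \ldots, a_p)$ with sign $(-1)^j$.

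Next I would expand both composites term by term. Applying $\delta$ to each summand of $\beta_\nu(a_0, \ldots, a_p) = \sum_{i=0}^p (-1)^{ip}(1, a_i, \ldots, a_p, a_0, \nu(a_1), \ldots, \nu(a_{i-1}))$ produces, for each $i$, a family of $p+2$ terms indexed by the face multiplied. Symmetrically, I would write $\delta(a_0, \ldots, a_p)$ as a sum of $p+1$ terms and apply $\beta_\nu$ to each. The bulk of the work is to reorganize the resulting double sums so that terms in which an "internal" multiplication $a_k a_{k+1}$ occurs appear in matching pairs with opposite signs. The matching rule is that a term from $\delta\beta_\nu$ in which $\delta$ multiplies two consecutive entries lying within the cyclically shifted block corresponds to a term from $\beta_\nu\delta$ in which $\delta$ performs the same multiplication before the cyclic shift; the signs $(-1)^{ip}$, $(-1)^j$, and the reindexing of the outer summation index conspire to make these pairs cancel. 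The key bookkeeping trick is to split the cases into multiplications involving $a_0$ (which, after a cyclic shift, become the "cyclic face" of the rotated tensor and conversely) and multiplications not involving $a_0$.

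After all internal multiplications cancel, the surviving terms are the two extremes: the $i=0$ summand of $\beta_\nu$ after applying the face that removes the leading $1$ yields $(a_0, a_1, \ldots, a_p)$; the $i=p+1$-type boundary coming from the cyclic face (which applies $\nu$ to the entry that was moved into position $0$) yields $-(\nu(a_0), \nu(a_1), \ldots, \nu(a_p))$. Together these give exactly $1 - T$.

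The main obstacle is sign bookkeeping. The $(-1)^{ip}$ weighting in $\beta_\nu$ is tuned precisely so that the family of cross-terms pairs up, but one has to track, simultaneously, (i) the face sign $(-1)^j$ from $\delta$, (ii) the reparametrisation of $j$ needed to re-express the multiplication as acting on the unshifted chain, and (iii) the fact that the twisted final face introduces a $\nu$, which in some terms must be absorbed into the $\nu(a_1), \ldots, \nu(a_{i-1})$ appearing in the tail of $\beta_\nu$. The cleanest way I would carry this out is to first verify the identity in the "universal" case where $\nu = \operatorname{id}$ (recovering Connes' classical homotopy), and then verify by inspection that each occurrence of $\nu$ in the twisted formula appears on exactly the entries needed to keep every cancellation intact, with the only imbalance being the $a_0$ versus $\nu(a_0)$ that produces $1-T$.
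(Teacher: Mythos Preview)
The paper does not actually prove this proposition: it simply attributes it to \cite{Lambre} and moves on. So there is no ``paper's own proof'' to compare against beyond the citation.

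Your outlined direct computation is the standard argument and is essentially what one finds in the cited reference. The strategy of expanding both composites, cancelling all internal-multiplication terms in matching pairs, and identifying the two surviving extreme terms as $1$ and $-T$ is exactly right. Your suggestion to first run the untwisted case (where $T=\operatorname{id}$ and the identity reduces to the classical $\delta B + B\delta = 0$) and then track the insertions of $\nu$ is a sensible way to organise the bookkeeping. One small imprecision: your description of the ``$i=p+1$-type boundary'' producing $-T$ is informal, since $i$ ranges only up to $p$; in practice the $-T$ term emerges from the twisted cyclic face acting on the appropriate summand (or equivalently from the mismatch between $\beta_\nu$ applied to the twisted last face of $\delta$ and the corresponding term on the other side), and it is worth pinning down exactly which pair fails to cancel and why the surviving element carries $\nu$ on every entry. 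But this is a matter of writing out the computation carefully rather than a gap in the approach.
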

Suppose for now that $\nu$ is diagonalisable, with eigenvalues $\Lambda$. Then, levelwise, $T$ is certainly diagonalisable, and our chains split into complexes $C^\mu_\bullet(A,A_\nu)$, the $\mu$-eigenspaces of $T$. From here, we notice that should we restrict to the $1$-eigenspace, the right hand side of Equation \ref{betaDiff} vanishes, so that $\beta_\nu$ is a chain map restricted to this subcomplex. Further, considering any $\mu$-eigenspace for $\mu\not=1$, $\beta_\nu$ demonstrates that the identity is nill-homotopic, so that 
$$\operatorname{H}_*(C^1_*(A, A_\nu)) = \HH_*(A, A_\nu).$$
We denote the operator $B_\nu :=\beta_\nu|_{C^1_\bullet(A,A_\nu)}$, which is our analogue of the Connes Operator for constructing the BV operator. \\
\subsection{Untwisted $\bv$-operators}
In \cite{Lambre}, they proceed to use the duality of Hochschild Homology and Cohomology for Frobenius Algebras to transport the morphism $B_\nu$ into a morphism on Hochschild Cohomology, which defines the $\bv$-operator. Then more work is done to ensure that they can transfer from the diagonalisable case to the semisimple case. We will first outline this approach for clarity, before defining our twisted $\bv$-operators. In this subsection, suppose that $A$ is Frobenius and that $\nu$ is the Nakayama Automorphism of $A$. We have the following sequence of chain isomorphisms:
$$\begin{aligned}D(C_*(A,A_\nu))&=\homo_k(A_\nu \otimes_{A^e} \mathbbm{B}_\bullet (A), k) \\
& \cong \homo_{A^e}(\mathbbm{B}_\bullet (A), D(A_\nu)) \\
& \cong \homo_{A^e}(\mathbbm{B}_\bullet (A), A) = C^*(A,A),
\end{aligned}$$
where we have used the map $a \mapsto \langle a, - \rangle$ for the map $A \xrightarrow{\cong} D(A_\nu)$. Write $\partial:D(\HH_*(A,A_\nu)) \xrightarrow{\cong} \HH^*(A,A)$ for the isomorphism induced on (Co)homology, which we will abuse to also write on chains and cochains. We can work through this chain of isomorphisms to describe explicitly the pairing this induces on Homology and Cohomology; given $\alpha \in C^p(A,A)$ and $(a_0, a_1, \cdots, a_n) \in C_p(A,A_\nu)$, we can write 
$$\begin{aligned}
\partial^{-1}(\alpha)((a_0, a_1, \cdots, a_p))= \langle  \alpha([ a_1 | \cdots | a_p ]) , a_0\rangle
\end{aligned}$$
\begin{remark} Note that this formulation differs from the one written down in \cite{Lambre} by the orientation of the inner product. In their paper they write down a morphism $A_\nu \rightarrow D(A)$, but in the sequence of isomorphisms above use $A \rightarrow D(A_\nu)$. This dualising ammounts to switching the inner product; though this is not important in their paper, it becomes crucial for the careful consideration of twisting coefficients in our theorem.\end{remark}
\begin{proposition} Supposing that $\nu$ is diagonalisable, the operator $\Delta$ defined via the following sequence of maps:
$$\begin{aligned}\Delta: \HH^*(A,A) &\xrightarrow{\partial^{-1}} D(\HH_*(A,A_\nu)) \cong D(\HH_*^1(A, A_\nu)) \xrightarrow{D(B_\nu)} \cdots \\
\cdots &\rightarrow D(\HH_{*-1}^1(A, A_\nu) \cong D(\HH_*(A,A_\nu)) \xrightarrow{\partial} \HH^{*-1}(A,A)
\end{aligned}
$$
gives $\HH^*(A,A)$ the structure of a $BV$ algebra. 
\begin{proof} See \cite{Lambre}. \end{proof}
\end{proposition}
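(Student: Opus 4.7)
The plan is to follow the strategy of Lambre, Zhou, and Zimmermann referenced by the author. The displayed sequence of isomorphisms is already an isomorphism of chain complexes (not merely of (co)homology), so the entire question reduces to exhibiting on $C^1_\bullet(A, A_\nu)$ a chain differential $B_\nu$ with $B_\nu^2 = 0$ together with a Cartan-type homotopy formula expressing the failure of $B_\nu$ to be a derivation for the cap-product action of $C^*(A,A)$ on $C_\bullet(A, A_\nu)$.

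First I would verify that $\Delta$ is well defined. The identity $\delta \beta_\nu + \beta_\nu \delta = 1 - T$ already displayed shows that $\beta_\nu$ restricted to the $1$-eigenspace of $T$ is a genuine chain map, and an essentially identical computation to the classical Connes case gives $B_\nu^2 = 0$ there (the extra $\nu$'s appearing in the definition of $\beta_\nu$ cancel when iterated because $\nu$ is an algebra automorphism). Transporting along the duality $\partial$ then gives $\Delta^2 = 0$ directly on $\HH^*(A, A)$.

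The real content is the BV identity
$$[f,g] = (-1)^{|f|+1}\bigl(\Delta(f \smile g) - \Delta(f) \smile g - (-1)^{|f|} f \smile \Delta(g)\bigr).$$
The strategy is to exhibit a Tamarkin--Tsygan precalculus structure, consisting of the Gerstenhaber algebra $C^*(A, A)$ acting on the mixed complex $(C_\bullet(A, A_\nu), \delta, B_\nu)$ via cap product $\iota_f$. One then verifies a Cartan--Rinehart homotopy formula, schematically
$$[B_\nu, \iota_f] - \iota_{\delta f} \; \sim \; (\text{an expression involving the cup-product action})$$
at the chain level. Passing to homology and dualising via $\partial$ to the cohomology side converts this into precisely the BV identity above, the bracket on the right being $[-,-]$ by virtue of how the Gerstenhaber bracket manifests as the defect of $\Delta$ being a derivation.

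I expect the main obstacle to be the interaction between $\beta_\nu$ and the $T$-eigenspace decomposition. The cap-product action of $C^*(A,A)$ on $C_\bullet(A, A_\nu)$ must be shown to preserve the $1$-eigenspace of $T$ (ultimately because $\nu$ is an algebra automorphism, so $T$ commutes with the cap action up to a shift that is trivial on the $1$-eigenspace), so that the Cartan formula descends along the restriction $B_\nu = \beta_\nu|_{C^1_\bullet}$. A secondary but delicate point is the orientation of the Frobenius pairing used in $\partial$: as the preceding remark notes, this paper's convention $\partial^{-1}(\alpha)(a_0,\ldots,a_p) = \langle \alpha([a_1|\cdots|a_p]), a_0\rangle$ differs from \cite{Lambre}, and the signs and placements of $\nu$ must be tracked meticulously through every use of this pairing so that the final BV identity emerges with the correct signs and the correct twisted coefficients.
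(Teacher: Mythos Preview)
The paper's own ``proof'' of this proposition is simply the citation ``See \cite{Lambre}'', so there is nothing to compare against beyond the strategy of that reference; your sketch correctly identifies that strategy (Tamarkin--Tsygan calculus on the mixed complex $(C^1_\bullet(A,A_\nu),\delta,B_\nu)$, the Cartan--Rinehart homotopy formula, and dualising via $\partial$), and the obstacles you flag about the $T$-eigenspace decomposition and pairing conventions are exactly the ones that must be handled. Your outline is essentially a faithful summary of what \cite{Lambre} does.
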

And to complete the story, we have the following:
\begin{theorem} Given $A$ a $k$-algebra admitting a semisimple Nakayama Automorphism, with $\bar{k}$ an algebraic closure of $k$, 
$$\HH^*_{\bar{k}}(A \otimes \bar{k}) \cong \HH^*(A) \otimes \bar{k}$$
and the restriction of the $\bv$-operator to the subspace $\HH^*(A) \otimes 1$ gives $\HH^*(A)$ the structure of a $\bv$-algebra.
\end{theorem}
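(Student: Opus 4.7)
The plan is to reduce the semisimple case to the diagonalisable case via faithfully flat base change along $k \hookrightarrow \bar{k}$. By the definition recalled in the paper, the Nakayama automorphism $\nu_A$ is semisimple exactly when $\nu_A \otimes \mathrm{id}_{\bar{k}}$ is diagonalisable, and this latter map is the Nakayama automorphism of $A \otimes_k \bar{k}$ (the Frobenius pairing base-changes to a non-degenerate pairing on $A \otimes \bar{k}$, and the defining identity $\langle a, - \rangle = \langle -, \nu(a) \rangle$ base-changes directly). Thus the previous proposition endows $\HH^*_{\bar{k}}(A \otimes \bar{k})$ with a $\bv$-operator $\Delta_{\bar{k}}$. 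The isomorphism $\HH^*_{\bar{k}}(A \otimes \bar{k}) \cong \HH^*(A) \otimes \bar{k}$ is then the standard flat base change statement, obtained by applying $-\otimes_k \bar{k}$ to $\homo_{A^e}(\mathbb{B}(A), A)$ and noting that $\mathbb{B}(A) \otimes \bar{k}$ is a bimodule resolution of $A \otimes \bar{k}$.

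\textbf{Descent of the construction.} The bulk of the work is then to show that $\Delta_{\bar{k}}$ preserves the $k$-subspace $\HH^*(A) \otimes 1$, and restricts there to a $\bv$-operator. I would argue that every ingredient in the construction of $\Delta_{\bar{k}}$ is already defined over $k$ and commutes with base change. The chain-level operator $\beta_\nu$ is given by an explicit $k$-linear formula, the duality $\partial$ comes from the $k$-linear pairing $a \mapsto \langle a, - \rangle$, and the restriction to the $1$-eigenspace subcomplex $C^1_*(A, A_\nu)$ makes sense over $k$ because the operator $T$ (which acts as $\nu^{\otimes(p+1)}$) is itself semisimple over $k$, and hence its spectral projector onto $\ker(T - \mathrm{id})$ is a polynomial in $T$ with $k$-coefficients. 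Flatness of $\bar{k}$ over $k$ ensures that kernels and images of all these $k$-linear maps commute with base change.

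\textbf{Identification on cohomology and conclusion.} I would next check that $H_*(C^1_*(A, A_\nu)) \cong \HH_*(A, A_\nu)$ continues to hold in the merely semisimple case; this is true over $\bar{k}$ by the diagonalisable argument already recalled, and faithful flatness of $\bar{k}/k$ transfers it back to $k$. It follows that $\Delta_{\bar{k}} = \Delta_k \otimes_k \mathrm{id}_{\bar{k}}$ for the analogously-defined operator $\Delta_k$ on $\HH^*(A)$, so $\Delta_{\bar{k}}$ preserves $\HH^*(A) \otimes 1$ and restricts to $\Delta_k$. The BV axioms are multilinear identities among $\smile$, $[-,-]$ and $\Delta$; since the Gerstenhaber structure on $\HH^*(A)$ already base-changes to that on $\HH^*(A \otimes \bar{k})$, the identities transfer via the $k$-linear injection $\HH^*(A) \hookrightarrow \HH^*(A) \otimes \bar{k}$. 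I expect the main obstacle to be the careful justification of the descent of the $1$-eigenspace decomposition, and the associated acyclicity of the complementary subcomplex, when $\nu$ is only semisimple rather than outright diagonalisable over $k$; in particular, one must avoid invoking the basis-of-eigenvectors argument used in the diagonalisable case and instead work with the $k$-rational spectral projection.
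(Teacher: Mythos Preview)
The paper does not actually prove this theorem: its entire proof is ``See \cite{Lambre}''. Your proposal, by contrast, sketches a genuine argument along faithfully-flat-descent lines, and this is essentially the strategy carried out in the cited reference of Lambre--Zhou--Zimmermann. So your approach is correct and in fact supplies more than the paper does; the reduction to the diagonalisable case via base change to $\bar{k}$, together with the observation that every piece of the construction ($\beta_\nu$, $T$, $\partial$) is already $k$-linear and commutes with $-\otimes_k \bar{k}$, is exactly the right idea.

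One small comment on your anticipated obstacle: your concern about descending the $1$-eigenspace projection is well placed, but the resolution is simpler than you suggest. Rather than invoking a $k$-rational spectral projector for $T$ (which would require knowing the minimal polynomial of $T$ splits over $k$, a fact you have not justified), it is cleaner to observe that the $1$-eigenspace $\ker(T-\mathrm{id})$ is defined over $k$ regardless, and that the inclusion $C^1_*(A,A_\nu) \hookrightarrow C_*(A,A_\nu)$ becomes a quasi-isomorphism after tensoring with $\bar{k}$; faithful flatness then pulls the quasi-isomorphism back to $k$. This avoids any appeal to a full eigenspace decomposition over $k$.
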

\begin{proof} See \cite{Lambre}. \end{proof}
For clarity, and for later usage we will briefly make explicit a ``formula'' for the $\bv$-operator. The main difficulty we face is that the map $\partial$ is rather hard to write down. To avoid this we have the following
\begin{lemma} Given $f \in \HH^n(A,A)$ we have 
$$\begin{aligned} \langle \Delta(f)([a_1 |\cdots | a_n ]), a_0\rangle =  \langle  \sum_{i=0}^n (-1)^{in}f([ a_i | \cdots | a_n | a_0 | \nu(a_1) | \cdots | \nu(a_{i-1}) ]), 1 \rangle \end{aligned}$$
\end{lemma}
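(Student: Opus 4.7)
The plan is to read the left-hand side as $\partial^{-1}(\Delta(f))$ evaluated on the chain $(a_0, a_1, \ldots, a_n)$ and the right-hand side as $\partial^{-1}(f)$ evaluated on $\beta_\nu(a_0, a_1, \ldots, a_n)$, and then invoke the defining composition of $\Delta$ to equate the two. Using the explicit pairing formula
$$\partial^{-1}(\alpha)((a_0, a_1, \ldots, a_p)) = \langle \alpha([a_1|\cdots|a_p]), a_0\rangle$$
recorded just above the lemma, the left-hand side is literally $\partial^{-1}(\Delta(f))$ evaluated on $(a_0, a_1, \ldots, a_n)$.

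By construction $\Delta = \partial \circ D(B_\nu) \circ \partial^{-1}$ after identification with the $1$-eigenspace of $T$, so dually $\partial^{-1}(\Delta(f))$ agrees with $\partial^{-1}(f) \circ B_\nu$ as functionals on $\HH_\bul(A, A_\nu)$. Substituting the explicit formula
$$\beta_\nu(a_0, a_1, \ldots, a_n) = \sum_{i=0}^n (-1)^{in}(1, a_i, \ldots, a_n, a_0, \nu(a_1), \ldots, \nu(a_{i-1}))$$
for $B_\nu$ and applying the pairing formula termwise, one finds that each summand contributes $(-1)^{in}\langle f([a_i|\cdots|a_n|a_0|\nu(a_1)|\cdots|\nu(a_{i-1})]), 1\rangle$: the leading $1$ in each tuple lands in the coefficient slot of the pairing, leaving the entire sequence $a_i, \ldots, \nu(a_{i-1})$ in the bar positions. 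Summing these yields exactly the right-hand side of the lemma.

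The main subtlety, and really the only place care is needed, is the passage from a cohomology-level identity to a pointwise formula in chains: $\Delta(f)$ as constructed is only a cohomology class, whereas the asserted formula defines an explicit cochain. I would resolve this by taking the right-hand side of the lemma as a specific chain-level representative $\Delta(f) \in C^{*-1}(A,A)$; the calculation above shows that it pairs with every chain in the $1$-eigenspace of $T$ exactly as demanded by the defining composition, so it represents the correct cohomology class (and is a cocycle, since $B_\nu$ is a chain map on the $1$-eigenspace via Equation \ref{betaDiff}). Beyond this bookkeeping, the result is a direct substitution of definitions together with careful tracking of the signs $(-1)^{in}$ inherited from $\beta_\nu$.
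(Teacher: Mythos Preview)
Your argument is correct and follows essentially the same route as the paper: rewrite $\Delta(f)=\partial\circ D(B_\nu)\circ\partial^{-1}(f)$ as $\partial^{-1}(\Delta(f))=\partial^{-1}(f)\circ B_\nu$, evaluate both sides on $(a_0,\ldots,a_n)$ using the pairing formula, and expand $B_\nu$ explicitly. Your added remark on the chain-level versus cohomology-level interpretation is a welcome clarification that the paper leaves implicit.
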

\begin{proof}
 Take $f\in \HH^n(A,A)$, which we will view as a function $f \in \homo_{k}({B}_n(A), A)$. Then, we know that $\Delta(f) = \partial \circ D(B_v) \circ \partial^{-1} f$. To avoid dealing with $\partial$, we can write this
$$\partial^{-1}(\Delta(f)) = D(B_v) \circ \partial^{-1}(f)$$
So that, taking $(a_0, a_1, \cdots, a_n) \in C_n(A,A_\nu)$  we have on the left side of our equation
$$\begin{aligned}\partial^{-1}(\Delta(f))(a_0, a_1, \cdots, a_n) &=  \langle  \Delta(f)( [ a_1 | \cdots | a_n ]), a_0 \rangle,
\end{aligned}$$
and on the right
$$\begin{aligned} D(B_v) \circ \partial^{-1}(f)(a_0,a_1,\cdots, a_n) &= \partial^{-1}(f)(B_v(a_0, a_1, \cdots, a_n)) = \cdots\\
\cdots &=\partial^{-1}(f)\left(\sum_{i=0}^n (-1)^{in} (1 , a_i , \cdots , a_n , a_0 , \nu(a_1) , \cdots , \nu(a_{i-1}))\right) \\
&= \langle  \sum_{i=0}^n (-1)^{in}f([a_i | \cdots | a_n | a_0 | \nu(a_1) | \cdots | \nu(a_{i-1}) ]), 1 \rangle.
\end{aligned}$$
Combining this all, we get the required result.
\end{proof}
\subsection{Twisted $\bv$-operators}
Having built up this theory, we note that Theorem \ref{TwistedTensorTheorem} consists of the Hochschild Cohomology of ``twisted cochains'', $C^*(R, R_{\hat{b}})$. We should expect that these will give rise to ``twisted'' $\bv$-operators, which will in fact appear in our final result. In pursuit of this, we have the following result (which incidentally proves a fact we assumed earlier).
\begin{proposition}\label{frobprop} Given a Frobenius ring $R$, graded by the abelian group $A$, with graded Nakayama Automorphism $\nu$, admitting a twisting map $t: A \otimes B \rightarrow k^\times$ and $b\in B$, as above, we have
$$R_{\hat{b}} \cong D(R_{\nu \cdot \hat{b}^{-1}}).$$

\end{proposition}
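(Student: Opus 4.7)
The plan is to generalize the (dualized form of the) preceding Lemma. Dualizing $R_\nu\cong D(R)$ yields $R\cong D(R_\nu)$ via $r\mapsto \langle r,-\rangle_R$, which is precisely the $b=0$ specialization of the present proposition. In general I would define
\begin{equation*}
\Phi : R_{\hat b}\longrightarrow D(R_{\nu\cdot \hat b^{-1}}),\qquad \Phi(r)(m) = \hat b(|m|)\,\langle r,m\rangle_R = t(|m|,b)\,\langle r,m\rangle_R
\end{equation*}
on homogeneous $r,m\in R$, extended $k$-linearly. The character prefactor is inserted precisely to reconcile the right-twist on the source $R_{\hat b}$ with the right-twist on $R_{\nu\cdot\hat b^{-1}}$, which passes to a left-twist after dualization; it degenerates to $1$ when $\hat b$ is trivial.

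The substantive work is verifying that $\Phi$ is a bimodule morphism. After unfolding—$R_{\hat b}$ has right action $r\cdot s = t(|s|,b)\,rs$, and by the preceding Proposition $\nu$ is graded, so $R_{\nu\cdot\hat b^{-1}}$ has right action $m\cdot s = t(|s|,b)^{-1}\,m\nu(s)$, whence $D(R_{\nu\cdot\hat b^{-1}})$ inherits $(s\cdot\phi)(m)=t(|s|,b)^{-1}\phi(m\nu(s))$ and $(\phi\cdot s)(m) = \phi(sm)$—the left-action compatibility is the key calculation
\begin{align*}
(s\cdot\Phi(r))(m) &= t(|s|,b)^{-1}\,t(|m|+|s|,b)\,\langle r, m\nu(s)\rangle_R \\
&= t(|m|,b)\,\langle rm,\nu(s)\rangle_R = t(|m|,b)\,\langle s, rm\rangle_R = \Phi(sr)(m),
\end{align*}
using, in order, the bicharacter property of $t$, Frobenius invariance to bring $m$ past $r$, and the Nakayama identity $\langle a,b\rangle_R = \langle b,\nu(a)\rangle_R$ to absorb $\nu(s)$. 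The right-action check is a shorter parallel computation involving only Frobenius invariance and the bicharacter property.

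Bijectivity is then essentially free: $\Phi$ is $k$-linear, and if $\Phi(r) = 0$ then $\hat b(|m|)\langle r,m\rangle_R = 0$ for every homogeneous $m$, so $\langle r,m\rangle_R = 0$ (as $t$ takes values in $k^\times$), hence $r = 0$ by non-degeneracy; a dimension count supplies surjectivity. The main obstacle I anticipate is not conceptual but notational: one must carefully track four distinct twists—left versus right on both source and target—and invoke the graded Nakayama identity at exactly the step where the $\nu(s)$ produced by unfolding the dual's left action must collapse back to $s$ to match $\Phi(sr)$. The graded hypothesis on $\langle-,-\rangle_R$ is precisely what guarantees $|\nu(s)|=|s|$, so that the bicharacter identity supplies the required cancellation; without it the twist factors would fail to line up.
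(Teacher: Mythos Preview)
Your proof is correct and follows essentially the same strategy as the paper: write down an explicit candidate isomorphism, verify left- and right-$R$-linearity by unfolding the twisted actions together with Frobenius invariance and the Nakayama identity, then conclude bijectivity from non-degeneracy and a dimension count. The only difference is cosmetic: the paper normalizes its map as $r\mapsto t(|r|,b)^{-1}\langle r,-\rangle_R$, placing the twist factor on the source element, whereas you place it on the argument via $\Phi(r)(m)=t(|m|,b)\langle r,m\rangle_R$; under the grading hypothesis these differ by the global unit $t(\sigma_R,b)^{-1}$, so both work (though note the paper's choice happens not to require $|\nu(s)|=|s|$ in the verification, while yours does).
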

\begin{proof}
The morphism realising this is as follows; 
$$\begin{aligned} R_{\hat{b}} &\rightarrow D(R_{\nu\hat{b}^{-1}}) \\
 r &\mapsto \langle r, - \rangle_R t(r,b)^{-1} \end{aligned}$$
We verify the actions agree. Take $x,r  \in R$, verifying the left action:
$$\begin{aligned} x\cdot r = xr &\mapsto \langle xr, - \rangle_R t(xr,b)^{-1}\\
&= \langle -, \nu(xr) \rangle_R t(xr,b)^{-1}\\
&= \langle r, -\nu(x) \rangle_R t(xr,b)^{-1} \\
&= \langle r, - \nu(x)t(x, b)^{-1} \rangle_R t(r,b)^{-1} \\
&= x \cdot \langle r, - \rangle_R t(r,b)^{-1} .\end{aligned}$$
and the right action:
$$\begin{aligned} r \cdot x = rx t(x,b) &\mapsto \langle rx, - \rangle_R t(r,b)^{-1} \\
&= \langle r, x-\rangle_R t(r,b)^{-1} \\
&=  \langle r, - \rangle_R t(r,b)^{-1} \cdot x .\end{aligned}$$
\end{proof} 
So then we have the following chain of isomorphisms
$$\begin{aligned}D(C_*(R,R_{\nu\hat{b}^{-1}}))&=\homo_k(R_{\nu\hat{b}^{-1}} \otimes_{R^e} \mathbbm{B}_\bullet (R), k) \\
& \cong \homo_{R^e}( \mathbbm{B}_\bullet (R), D(R_{\nu\hat{b}^{-1}})) \\
& \cong \homo_{R^e}( \mathbbm{B}_\bullet (R), R_{\hat{b}}) = C^*(R,R_{\hat{b}}),
\end{aligned}$$
whose composition we denote $\partial_b$. So in exact accordance with the untwisted version, we give the following definition
\begin{defn} Given $R$, $A$, $B$, as in the previous proposition and $b \in B$, we define the $\bv$-operator right twisted by $b$, $\Delta_b$, to be the morphism defined by
$$\begin{aligned}\Delta_b: \HH^*(R,R_{\hat{b}}) &\xrightarrow{\partial_b^{-1}} D(\HH_*(R,R_{\nu\hat{b}^{-1}})) \cong D(\HH_*^1(R,R_{\nu\hat{b}^{-1}})) \xrightarrow{D(B_{\nu\hat{b}^{-1}})} \cdots \\
\cdots &\rightarrow D(\HH_{*-1}^1(R,R_{\nu\hat{b}^{-1}}) \cong D(\HH_*(R,R_{\nu\hat{b}^{-1}})) \xrightarrow{\partial_b} \HH^{*-1}(R,R_{\hat{b}}).
\end{aligned}$$
 \end{defn}
\begin{lemma} Given $R$, $\nu$, $b \in B$ as above, we have that
$$\begin{aligned} \langle \Delta_b(f)([r_1, \ldots, r_n]), r_0 \rangle t(r_0,b) = \sum (-1)^{in}\langle f([r_i | \ldots |r_n | r_0 | \nu\hat{b}^{-1}(r_1) | \ldots | \nu\hat{b}^{-1}(r_i)], 1) \rangle .\end{aligned}$$
\end{lemma}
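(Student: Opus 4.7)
The proof follows the same pattern as the analogous untwisted lemma above, but with careful bookkeeping of twisting coefficients. My plan is to first make the isomorphism $\partial_b$ explicit at the chain level, then apply the defining identity
$$\partial_b^{-1} \circ \Delta_b = D(B_{\nu\hat{b}^{-1}}) \circ \partial_b^{-1},$$
and finally reconcile the resulting twisting factors using the grading condition on $\langle -, -\rangle_R$. By linearity, it suffices to verify the equation on homogeneous inputs.

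First, I would trace the composition defining $\partial_b$: the isomorphism $R_{\hat{b}} \cong D(R_{\nu\hat{b}^{-1}})$ of \Cref{frobprop}, sending $r \mapsto \langle r, -\rangle_R \cdot t(r,b)^{-1}$, followed by the standard hom-tensor adjunction. This yields, for homogeneous $g \in C^n(R, R_{\hat{b}})$ and a homogeneous chain $(r_0, r_1, \ldots, r_n) \in C_n(R, R_{\nu\hat{b}^{-1}})$,
$$\partial_b^{-1}(g)(r_0, r_1, \ldots, r_n) = \langle g([r_1 | \cdots | r_n]), r_0\rangle_R \cdot t\bigl(g([r_1|\cdots|r_n]), b\bigr)^{-1},$$
exactly as in the untwisted case but with an extra twisting factor depending on the degree of $g([r_1|\cdots|r_n])$.

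Next, I would evaluate both sides of $\partial_b^{-1}(\Delta_b(f))(r_0, \ldots, r_n) = \partial_b^{-1}(f)\bigl(B_{\nu\hat{b}^{-1}}(r_0, \ldots, r_n)\bigr)$ using this formula. On the left, the grading constraint forces $|\Delta_b(f)([r_1|\cdots|r_n])| = -|r_0| - \sigma_R$ whenever the inner product is nonzero, so the twisting factor becomes $t(r_0, b)\cdot t(\sigma_R, b)$. On the right, each summand of
$$B_{\nu\hat{b}^{-1}}(r_0, \ldots, r_n) = \sum_{i=0}^n (-1)^{in}\bigl(1, r_i, \ldots, r_n, r_0, \nu\hat{b}^{-1}(r_1), \ldots, \nu\hat{b}^{-1}(r_{i-1})\bigr)$$
has leading entry $1$, and the pairing $\langle f(\cdots), 1\rangle \neq 0$ forces $|f(\cdots)| = -\sigma_R$, producing the common factor $t(\sigma_R, b)$ in every summand. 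Cancelling $t(\sigma_R, b)$ from both sides yields the claimed identity.

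The main obstacle is disentangling the two sources of twisting: the factor $t(\cdot, b)^{-1}$ introduced by \Cref{frobprop}, and the grading shift by $\sigma_R$ in the inner product. The key observation that makes the lemma come out cleanly is that these two effects combine symmetrically on both sides into a single $t(\sigma_R, b)$, which then cancels, leaving only the factor $t(r_0, b)$ asserted in the statement.
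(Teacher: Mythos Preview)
Your proposal is correct and follows essentially the same strategy as the paper's proof: write down $\partial_b^{-1}$ explicitly, apply the identity $\partial_b^{-1}\circ\Delta_b = D(B_{\nu\hat b^{-1}})\circ\partial_b^{-1}$, and compare twisting factors using the grading condition on $\langle-,-\rangle_R$. The only cosmetic difference is that you isolate the common factor $t(\sigma_R,b)$ on each side and cancel it directly, whereas the paper first argues that $|\Delta_b(f)|=|f|$ and then observes that the two twisting exponents differ by $|r_0|$; these amount to the same computation.
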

\begin{proof}
Recall our isomorphism $R \rightarrow D(R_{\nu\hat{b}^{-1}})$ is
$$r \mapsto \langle r , - \rangle t^{-1}(a,b).$$
Given $g \in C^n(R,R_{\hat{b}})$, 
$$\partial_b^{-1}(g)(r_0, \ldots, r_n) = \langle g([r_1 | \ldots | r_n]), r_0 \rangle t(g([r_1 | \ldots | r_n]),b)^{-1},$$
so as we still have the formula $\partial_b^{-1}(\Delta(f)) = D(B_{\nu\hat{b}^{-1}}) \circ \partial_b^{-1}(f)$
$$\begin{aligned}&\langle\Delta_b(f)  ([r_1 | \ldots | r_n]), r_0 \rangle t(\Delta_b(f)([r_1 | \ldots | r_n], b)^{-1} \\ 
=& \sum_{i=0}^n (-1)^{in} \langle f([r_{i} | \ldots |r_0 |\nu b^{-1}(r_1) | \ldots | \nu b^{-1}(r_{i-1})], 1 \rangle \ldots \\
 &  \ldots t(f([r_{i} | \ldots |r_0 |\nu b^{-1}(r_1) | \ldots | \nu b^{-1}(r_{i-1})]), b)^{-1}.
\end{aligned}$$
Now, we note that
$$|f([r_{i} | \ldots |r_0 |\nu b^{-1}(r_1) | \ldots | \nu b^{-1}(r_{i-1})])| = |f| + |[r_{i} | \ldots |r_0 |\nu b^{-1}(r_1) | \ldots | \nu b^{-1}(r_{i-1})]|,$$
where as $\nu$ is graded and shifting our elements does not alter grading, this is exactly
$$\ldots = |f| + |[r_0 | \ldots | r_n]| = |f| + |r_0| + |[r_1 | \ldots | r_n]|.$$
Further, we see that our inner product will vanish exactly when 
$$|f| + |r_0| + |[r_1 | \ldots | r_n]| + \sigma_R \not= 0.$$
But considering our original inner product, this demonstrates that $\Delta_b(f)$ is also graded, with degree $|\Delta_b(f)|=|f|$. So, 
$$|\Delta_b(f)([r_1 | \ldots | r_n])| = |f|+|[r_1 | \ldots | r_n]|,$$
which tells us that the difference between the twisting coefficients on both sides is $t(r_0, b)$. This gives us the formula as stated.
\end{proof}
We have the following corollary of Proposition \ref{frobprop}.
\begin{corollary} Given a Frobenius ring $S$, graded by an abelian group $B$, with graded Nakayama Automorphism $\nu$, admitting a twisting map $t: A \otimes B \rightarrow k^\times$, and $a \in A$, we have
$$\prescript{}{\hat{a}}{S} \cong D(S_{\nu \hat{a}}).$$
\end{corollary}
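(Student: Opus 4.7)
The plan is to mirror the argument of Proposition \ref{frobprop} with left and right systematically swapped, replacing $(R,A,b)$ by $(S,B,a)$ throughout. In the proposition the character $\hat{b}$ sits as a right twist on $R$ and appears as $\hat{b}^{-1}$ on the right of the dualised module, which is why the map $r\mapsto\langle r,-\rangle_R\,t(r,b)^{-1}$ needed a compensating twist coefficient. In the corollary the character $\hat{a}$ sits as a \emph{left} twist on $S$, which under $k$-duality is transported to a right twist on the dual side without an extra inversion; this is precisely why the statement features $\nu\hat{a}$ rather than $\nu\hat{a}^{-1}$, and, as I expect to see below, why no twist coefficient is needed in the map itself.

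Concretely, I would propose the candidate $\Psi:\prescript{}{\hat{a}}{S}\to D(S_{\nu\hat{a}})$ defined simply by $\Psi(s)=\langle s,-\rangle_S$. Bijectivity is automatic because on the underlying vector space this is the classical Frobenius identification $S\cong D(S)$; the content is to check the two $S$-actions.

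For the right action I would use only the invariance axiom:
$$\Psi(s\cdot x)(y)=\langle sx,y\rangle_S=\langle s,xy\rangle_S=\Psi(s)(xy)=(\Psi(s)\cdot x)(y).$$
For the left action I would combine invariance with the Nakayama identity $\langle a,b\rangle_S=\langle b,\nu(a)\rangle_S$. Unfolding $x\cdot s=t(a,x)xs$ on the source and $(x\cdot f)(y)=t(a,x)f(y\nu(x))$ on the target (the latter coming from the right action $s\cdot x=t(a,x)s\nu(x)$ in $S_{\nu\hat{a}}$), the check reduces to
$$\Psi(x\cdot s)(y)=t(a,x)\langle xs,y\rangle_S=t(a,x)\langle s,y\nu(x)\rangle_S=(x\cdot\Psi(s))(y),$$
where the middle step is invariance followed by Nakayama.

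The main hazard I foresee is purely bookkeeping: one must be meticulous about the bimodule structure induced on $D(S_{\nu\hat{a}})$ (the left and right $S$-actions swap sides under duality) and careful about interpreting $\nu\hat{a}$ as the composite automorphism $x\mapsto t(a,x)\nu(x)$. Once these conventions are pinned down the verification is mechanical and directly parallel to the proof of Proposition \ref{frobprop}.
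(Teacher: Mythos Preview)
Your proof is correct. The paper takes a slightly different route: rather than re-running the bimodule verification, it first observes the isomorphism $\prescript{}{\hat{a}}{S}\xrightarrow{\cong} S_{\hat{a}^{-1}}$, $s\mapsto t(a,s)\,s$, and then invokes Proposition~\ref{frobprop} (applied to $S$ with the character $\hat{a}^{-1}$) to obtain $S_{\hat{a}^{-1}}\cong D(S_{\nu\hat{a}})$; it then notes that the composite is exactly $s\mapsto\langle s,-\rangle_S$. So both arguments land on the same explicit isomorphism $s\mapsto\langle s,-\rangle_S$ that you propose. Your direct check is marginally shorter and more self-contained; the paper's version is more modular in that it reuses the proposition verbatim rather than mirroring its proof. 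Either way, the key observation---that moving the twist from the left of $S$ to the right of the dualised module flips $\hat{a}^{-1}$ to $\hat{a}$ and absorbs the compensating coefficient, leaving the bare map $s\mapsto\langle s,-\rangle_S$---is the same.
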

\begin{proof} We can compose the isomorphism from Proposition \ref{frobprop} with the isomorphism
$$\begin{aligned} \prescript{}{\hat{a}}{S} &\xrightarrow{\cong} S_{\hat{a}^{-1}} \\
 s &\mapsto t(a,s) s \end{aligned}$$
to get our result. Most crucially, the composition is given exactly by
$$s \mapsto \langle s, - \rangle$$
\end{proof}
We can trace out the same series of isomorphisms, to obtain $\prescript{}{a}{\partial}: D(\HH_*(S, S_{\nu \hat{a}})) \rightarrow \HH^*(S,\prescript{}{\hat{a}}{S}) $, which gives rise to the following definition.

\begin{defn} Given $S$, $A$, $B$, as in the previous proposition and $a \in A $, we define the $\bv$-operator left twisted by $a$, $\prescript{}{a}{\Delta}$ to be the morphism defined by
$$\begin{aligned}\prescript{}{a}{\Delta}: \HH^*(S,\prescript{}{\hat{a}}{S}) &\xrightarrow{\prescript{}{a}{\partial}^{-1}} D(\HH_*(S, S_{\nu \hat{a}}))  \cong D(\HH_*^1(S, S_{\nu \hat{a}}))  \xrightarrow{D(B_{\nu\hat{a}})} \cdots \\
\cdots &\rightarrow D(\HH_*^1(S, S_{\nu \hat{a}}))  \cong D(\HH_{*-1}(S, S_{\nu \hat{a}}))  \xrightarrow{\partial_b}  \HH^{*-1}(S,\prescript{}{\hat{a}}{S})\end{aligned}$$
And similarly, we have the formula
$$\langle \prescript{}{a}{\Delta}(g)([s_1 | \ldots | s_m]), s_0 \rangle = \sum_{i=0}^m (-1)^{im}\langle g([s_i | \ldots | s_0 | \nu\hat{a}(s_1) | \ldots | \nu\hat{a}(s_{i-1})], 1 \rangle$$
\end{defn}

\subsection{The Nakayama Automorphism and Cochains}\label{NakayamaCochains}
We will briefly consider the implications of some of these results. Earlier, we saw that given a Frobenius algebra $A$ and a semisimple automorphism $\nu:A \rightarrow A$ the map $C^1_*(A,A_\nu) \hookrightarrow C_*(A,A_\nu)$ is a quasi-isomorphism. But as $D(-)$ is exact, this tells us that the restriction
$$C^*(A,A) \rightarrow D(C^1_*(A,A_\nu))$$
is also a quasi-isomorphism. Given our map from earlier, $T:C_*(A,A_\nu) \rightarrow C_*(A,A_\nu)$, the map $T^*:=D(T)$ will act on $C^*(A,A)\cong D(C_*(A,A_\nu))$. In this way, we can consider the invariant subspace of $T^*$, which will be a subcomplex of $C^*(A,A)$, and we denote $C^*_\nu(A,A)$. Given $f \in C^*_\nu(A,A)$, we know that $T^*(f) =f$. Precisely, this says
$$ \partial^{-1}(f)(a_0,\ldots,a_n) = \partial^{-1}(f)(\nu(a_0), \ldots, \nu(a_n))$$
which, spelled out, means
$$\begin{aligned}  
\langle a_0 , f([\nu(a_1), \ldots, \nu(a_n)]) \rangle =&\langle f([\nu(a_1), \ldots, \nu(a_n)]), \nu(a_0) \rangle \\
=& \partial^{-1}(f)(\nu(a_0), \ldots, \nu(a_n)) \\
=&  \partial^{-1}(f)(a_0, a_1, \ldots, a_n) = \langle f([a_1, \ldots, a_n]), a_0 \rangle. \end{aligned}$$
In the case where $\nu$ is the Nakayama Automorphism of $A$, as $a_0$ was arbitrary here, this tells us that $ f([\nu(a_1), \ldots, \nu(a_n)]) = \nu(f([a_1,\ldots, a_n]))$. So then the $T^*$-invariant subspace of $C^*(A,A)$ are those functions which in some sense are $\nu$-equivariant. We then have the following morphisms of chain complexes:
$$C^*_\nu(A,A) \rightarrow C^*(A,A) \rightarrow D(C_*^1(A,A_\nu))$$
which levelwise we may write as 
$$D(C_*(A,A_\nu))^1 \rightarrow D(C_*(A,A)) \rightarrow D(C_*^1(A,A_\nu))$$
where we have again used $(-)^1$ to denote the $1$-eigenspace of $T^*$. We have the following lemma:
\begin{lemma} Given a finite dimensional vector space $V$ and a diagonalisable map $T:V \rightarrow V$ with non-empty $1$-eigenspace, the composition
$$i:D(V)^1 \rightarrow D(V) \rightarrow D(V^1),$$
is an isomorphism.\end{lemma}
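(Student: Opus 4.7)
The plan is to exploit the eigenspace decomposition of $T$ to make both $D(V)^1$ and the restriction map completely explicit. Since $T$ is diagonalisable, write $V = V^1 \oplus W$, where $W := \bigoplus_{\lambda \neq 1} V^\lambda$ is the direct sum of the non-unit eigenspaces. Dualising this decomposition gives a direct sum $D(V) \cong D(V^1) \oplus D(W)$, where each summand is embedded in $D(V)$ by extension by zero on the complementary subspace, and each summand is $T^*$-stable because $T$ preserves both $V^1$ and $W$.

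Next I would identify $D(V)^1$ inside this decomposition by computing the $T^*$-eigenvalues on each piece. This follows directly from the adjoint computation recalled in the paragraph before the lemma: if $\{e_i\}$ is a basis of eigenvectors of $T$ with eigenvalues $\lambda_i$ and $\{\epsilon_i\}$ is the corresponding (right) dualizing basis, then $T^*\epsilon_i = \lambda_i \epsilon_i$. Applied to a basis of $V^\lambda$, this shows $T^*$ acts on $D(V^\lambda)$ (as embedded in $D(V)$) by the scalar $\lambda$. In particular $T^*$ is the identity on the $D(V^1)$ summand and has no $1$-eigenvector in the $D(W)$ summand, so $D(V)^1$ coincides exactly with the $D(V^1)$ summand inside $D(V)$, i.e.\ with the subspace of functionals that vanish on $W$.

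With this identification the composition $i$ becomes transparent. For injectivity: if $f \in D(V)^1$ and $f|_{V^1} = 0$, then by the preceding paragraph $f$ also vanishes on $W$, hence on all of $V = V^1 \oplus W$, so $f = 0$. For surjectivity: given $g \in D(V^1)$, let $\tilde g \in D(V)$ be its extension by zero on $W$; a one-line check on eigenvectors shows $\tilde g(Tx) = \lambda \tilde g(x) = \tilde g(x)$ for $x \in V^\lambda$ (both sides vanish unless $\lambda = 1$), so $T^*\tilde g = \tilde g$ and $i(\tilde g) = g$.

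The only real step with any content is the eigenvalue computation for $T^*$ on each dual eigenspace, and this is essentially already done in the paper's preparatory discussion of adjoints; the remainder is straightforward bookkeeping with the eigenspace decomposition. In fact the argument makes clear that the inverse of $i$ is simply extension by zero on $W$ along the splitting $V = V^1 \oplus W$.
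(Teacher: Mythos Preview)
Your argument is correct and rests on the same underlying fact as the paper's proof: the eigenspace decomposition $V = V^1 \oplus W$ with $W = \bigoplus_{\lambda\neq 1} V^\lambda$, which in the paper's language is exactly the splitting $V = \operatorname{Ker}(T-I)\oplus\operatorname{Im}(T-I)$. The difference is organisational. The paper first identifies $D(V)^1$ as the annihilator $\operatorname{Im}(T-I)^0$, uses rank--nullity to match dimensions with $D(V^1)$, and then proves injectivity by establishing the direct sum $\operatorname{Ker}(T-I)\oplus\operatorname{Im}(T-I)=V$. You instead dualise the eigenspace decomposition at the outset, compute that $D(T)$ acts by the scalar $\lambda$ on each $D(V^\lambda)$ summand, and read off directly that $D(V)^1$ is the $D(V^1)$ summand; this buys you an explicit inverse (extension by zero on $W$) rather than an existence argument via dimension.

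One small remark: the ``adjoint computation'' you invoke was stated in the paper for the adjoint with respect to a nondegenerate bilinear form on $V$, with dualizing bases living in $V$ itself, whereas here $T^*=D(T)$ is the transpose acting on $D(V)$. The computation you actually need is the even more elementary fact that if $\{e_i\}$ is an eigenbasis with eigenvalues $\lambda_i$ and $\{\epsilon_i\}\subset D(V)$ is the standard dual basis, then $(D(T)\epsilon_i)(e_j)=\epsilon_i(\lambda_j e_j)=\lambda_i\delta_{ij}$. This is the same conclusion, so nothing is lost, but the citation is slightly off.
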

\begin{proof} First we note that
$$D(V)^1 = \{ \delta \in D(V): \delta(Tv-v) =0 \textrm{ for all } v \in V \} = \operatorname{Im}(T-I)^0,$$
and further from basic linear algebra we know that 
$$\begin{aligned}\operatorname{Dim}(\operatorname{Im}(T-I)^0) &= \operatorname{Dim}(V) - \operatorname{Dim}(\operatorname{Im}(T-I)) \\
&=\operatorname{Dim}(\operatorname{Ker}(T-I)) = \operatorname{Dim}(D(V^1)).\end{aligned}$$
Thus the dimensions of these vector spaces are equal, and we need only show that our composition is an injection. We note that for $\delta \in D(V)^1$, $i(\delta)=0$ exactly when $\delta$ vanishes on $\operatorname{Ker}(T-I)$. So then, it suffices to show that $\operatorname{Im}(T-I)\oplus \operatorname{Ker}(T-I)=V$, for this will ensure that $\delta=0$. Now, that $T$ is diagonalisable means
$$V = \bigoplus_{\lambda \in k} \operatorname{Ker}(T-\lambda I).$$
We also see that $\operatorname{Im}(T-I) \subset \bigoplus_{\lambda \in k \backslash \{0\}} \operatorname{Ker}(T-\lambda I)$. But on each $\operatorname{Ker}(T-\lambda I)$, $T-I$ acts by $(\lambda-1)I$, so that $\operatorname{Ker}(T-\lambda I) \subset \operatorname{Im}(T- I)$. So
$$ \bigoplus_{\lambda \in k \backslash \{0\}} \operatorname{Ker}(T-\lambda I) = \operatorname{Im}(T-I),$$
and the direct sum is verified. 
\end{proof}
\begin{corollary} Suppose $A$ is a Frobenius algebra with semisimple automorphism $\nu:A\rightarrow A$. The inclusion
$$C^*_\nu(A,A) \rightarrow C^*(A,A)$$
is a quasi-isomorphism. \end{corollary}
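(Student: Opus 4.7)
The plan is to conclude by the two-out-of-three property applied to the composition
\[
C^*_\nu(A,A) \hookrightarrow C^*(A,A) \xrightarrow{\;\;\rho\;\;} D(C^1_*(A,A_\nu))
\]
that was built up in the preceding discussion. The map $\rho$ is already known to be a quasi-isomorphism: it is obtained by applying the exact functor $D(-)$ to the inclusion $C^1_*(A,A_\nu) \hookrightarrow C_*(A,A_\nu)$, which the subsection opened by identifying as a quasi-isomorphism (the eigenspaces for $\mu \neq 1$ are nullhomotopic via $\beta_\nu$). So it suffices to show that the composite itself is an isomorphism, from which the two-out-of-three property forces the inclusion to be a quasi-isomorphism.

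For the composite, I would argue level by level and invoke the preceding lemma. In each degree $n$, the composite is exactly the map
\[
D(C_n(A,A_\nu))^1 \hookrightarrow D(C_n(A,A_\nu)) \twoheadrightarrow D(C^1_n(A,A_\nu)),
\]
which is the map $i$ of the lemma applied to the finite-dimensional space $V = C_n(A,A_\nu)$ and the operator $T = \nu^{\otimes(n+1)}$ (up to the chosen identifications). Hence if $T$ acts diagonalisably on $C_n(A,A_\nu)$, the lemma delivers the desired isomorphism in that degree.

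The only obstacle — and really the one place work needs doing — is bridging the gap between the hypothesis (which gives $\nu$ semisimple, hence diagonalisable only after extension to $\bar{k}$) and the lemma (stated for diagonalisable $T$). The cleanest route is base change: tensor the entire diagram with $\bar{k}$ over $k$. Under $-\otimes_k \bar{k}$, the chain complexes $C^*(A,A)$, $C^*_\nu(A,A)$ and $C^*_*(A,A_\nu)$ all go to their counterparts for $A \otimes_k \bar{k}$, the operator $T$ becomes diagonalisable, and the formation of $1$-eigenspaces and of $D(-)$ commutes with the (flat, in fact faithfully flat) extension $\bar{k}/k$. The lemma then applies verbatim over $\bar{k}$ to show the composite is a levelwise isomorphism after base change, and faithful flatness lets us descend this to an isomorphism over $k$. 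Alternatively, one can bypass base change by observing that the proof of the lemma only invokes the decomposition $V = \operatorname{Ker}(T-I)\oplus \operatorname{Im}(T-I)$, which is valid for any semisimple $T$ (its minimal polynomial is separable, hence one may split off the factor $(x-1)$); this gives the levelwise isomorphism directly over $k$.

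With the composite shown to be an isomorphism of cochain complexes, it is in particular a quasi-isomorphism, and combined with the quasi-isomorphism $\rho$ above, two-out-of-three yields that $C^*_\nu(A,A)\hookrightarrow C^*(A,A)$ is a quasi-isomorphism, as required.
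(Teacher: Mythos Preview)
Your argument is correct and follows essentially the same route as the paper: both reduce to the diagonalisable case by base change along $k \hookrightarrow \bar{k}$ and then invoke the preceding lemma, with the two-out-of-three property for the factorisation through $D(C^1_*(A,A_\nu))$ supplying the conclusion. You make the two-out-of-three step explicit where the paper leaves it implicit, and your alternative observation---that the lemma's proof only uses the splitting $V = \operatorname{Ker}(T-I)\oplus \operatorname{Im}(T-I)$, which already holds for semisimple $T$---is a nice shortcut that avoids base change entirely.
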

\begin{proof} 
We begin by reducing to the diagonalisable case. Because $\nu: A \rightarrow A$ is semisimple, writing $\bar{A} := A \otimes_k \bar{k}$, the morphism $\bar{\nu}: \bar{A} \rightarrow \bar{A}$ is diagonalisable. Moreover we recall that
$$C^*(\bar{A}, \bar{A}) \cong C^*(A,A) \otimes_k \bar{k},$$
and we see that the map $\bar{T}:C^*(\bar{A}, \bar{A}) \rightarrow C^*(\bar{A}, \bar{A})$ is given by $T \otimes \operatorname{Id}_{\bar{k}}$. We can see that $C^*_{\bar{\nu}}(\bar{A}, \bar{A}) = C^*_\nu(A,A) \otimes \bar{k}$. So then we have that the inclusion
$$C^*_\nu(A,A) \rightarrow C^*(A,A),$$
which when tensored with $\bar{k}$ gives
$$C^*_{\bar{\nu}}(\bar{A}, \bar{A}) \rightarrow C^*(\bar{A}, \bar{A}).$$
As $\bar{k}$ is flat over $k$, this inclusion will be a quasi-isomorphism if and only if the original inclusion was a quasi-isomorphism. So it suffices to check in the diagonalisable case. But the diagonalisable case is verified by our previous lemma.
\end{proof}
This applies to our twisted complexes. In particular, we can consider $C^*(R,R_{\hat{b}}) \cong D(C_*^1(R,R_{\nu \hat{b}^{-1}}))$. To characterise our invariant subspace, take $f \in C_{\nu\hat{b}^{-1}}^n(R,R_{\hat{b}})$. We have that

$$ \partial^{-1}(f)(a_0,\ldots,a_n) = \partial^{-1}(f)(\nu\hat{b}^{-1}(a_0), \ldots, \nu\hat{b}^{-1}(a_n)),$$
which spelled out, means
$$\begin{aligned}  &\langle f([\nu\hat{b}^{-1}(a_1), \ldots, \nu\hat{b}^{-1}(a_n)]), \nu\hat{b}^{-1}(a_0) \rangle t^{-1}(f([\nu\hat{b}^{-1}(a_1), \ldots, \nu\hat{b}^{-1}(a_n)]), b) \\
=& \partial^{-1}(f)(\nu\hat{b}^{-1}(a_0), \ldots, \nu\hat{b}^{-1}(a_n)) \\
=&  \partial^{-1}(f)(a_0, a_1, \ldots, a_n) = \langle f([a_1, \ldots, a_n]), a_0 \rangle t(f[a_1,\ldots,a_n],b)^{-1}.\end{aligned}.$$
So then, as $\nu$ and $\hat{b}$ both preserve grading, this reduces to
$$\langle a_0, f([\nu(a_1), \ldots, \nu(a_n)]) \rangle t(a_0\ldots a_n, b)^{-1} = \langle  f([a_1, \ldots, a_n]), a_0 \rangle.$$

\section{Twisted Comparison Maps}
To compute the $\bv$-operator, we will go directly through the Bar Resolution of $R \twisten S$. Traditionally, in the untwisted setting we have morphisms
$$\begin{aligned} \operatorname{AW}:  \mathbb{B}(R \otimes S)  &\rightarrow \mathbb{B}(R) \ten \mathbb{B}(S)\\
\operatorname{EZ}: \mathbb{B}(R) \ten \mathbb{B}(S) &\rightarrow \mathbb{B}(R \otimes S).
\end{aligned}$$
Fortunately, twisted versions of these were computed in \cite{Shepler}, that we will denote $\awt$ and $\ezt$ and which in our case are easy to write down explicitly. We will go through the entire construction as the details will be crucial later on.
\begin{remark} 
It is worth noting that Shepler and Witherspoon use a subtly different notion of twisted tensor product from ours; crucially theirs is more general so that their construction applies to our situation. Their twist is built from a \emph{twisting morphism} 
$$\tau: S \otimes R \rightarrow R \otimes S$$
satisfying properties that we will not spell our here. In our case, for $r \in R ,s \in S$ homogeneous, $\tau(s \otimes r) = t(r,s)r \otimes s$.
Further, we remark that we first construct $\operatorname{\ezt}$ and $\operatorname{\awt}$ on the unreduced Bar resolutions, and subsequently pass to the reduced setting by a choice of splitting. 
\end{remark}
To proceed we will need some definitions that are standard from the construction of the Eilenberg-Zilber map.
\begin{defn} Write $\mathfrak{S}_n$ for the symmetric group on $n$ symbols. Given $p,q \in \mathbb{N}$ such that $p+q=n$, a $(p,q)$-shuffle in $\mathfrak{S}_n$ is a permutation $\sigma$ so that
$$\sigma(1) <  \cdots < \sigma(p) \textrm{ and } \sigma(p+1) < \cdots < \sigma(p+q).$$
We write $\mathfrak{S}_{p,q}$ for the collection of $(p,q)$-shuffles in $\mathfrak{S}_n$. 
\end{defn}
The maps $\awt$ and $\ezt$ are defined by performing the untwisted maps $\operatorname{AW}, \operatorname{EZ}$ and resolving the twisting on an intermediary resolution, denoted $Y_n$, defined as follows,
$$Y_n = R^{\otimes (n+2)} \otimes S^{\otimes (n+2)},$$
where $R$ acts on the left by acting on the leftmost component, and on the right acts on the rightmost $R$-component, twisted by the $S^{\otimes (n+2)}$ factor. $S$ acts similarly. Here, we have in some sense ``untwisted'' the regular Bar Resolution $\mathbf{B}(R \twisten S)$. Our penalty for doing this comes in twisting prefactors. Writing $\bezt$ and $\bawt$ for the respective maps on the unreduced Bar Resolution, we define maps
\begin{center}
\begin{tikzcd}
\mathbf{B}(R) \twisten \mathbf{B}(S) \arrow[r, "\theta", shift left=1 ex]  \arrow[rr, bend left=30, "\operatorname{\bezt}"{below}, dotted]& Y_\bullet \arrow[r, "\rho^{-1}", shift left=1 ex] \arrow[l, "\phi", shift left =1 ex] & \mathbf{B}(R\twisten S) \arrow[l, "\rho", shift left =1 ex]  \arrow[ll, bend left=30, "\operatorname{\bawt}"{below}, dotted]
\end{tikzcd}
\end{center}
\begin{itemize}
\item $\rho$ is defined bilinearly:
$$\rho_n: (R \twisten S)^{n+2} \rightarrow R^{\otimes (n+2)} \otimes S^{\otimes (n+2)}$$ 
$$\begin{aligned}
&(1_{R \otimes S}) \otimes (r_1 \otimes s_1) \otimes \cdots \otimes (r_n \otimes s_n) \otimes (1_{R \otimes S})\mapsto \cdots \\
\cdots &\mapsto \prod_{1\leq i<j\leq n} t(r_j,s_i)(1_R \ten r_1 \ten \cdots \ten r_n \ten 1_R) \otimes (1_S \ten s_1 \ten \cdots \ten s_n \ten 1_S) .\end{aligned}$$
\item  $\phi$ is defined bilinearly:
$$ \phi_n: R^{\otimes (n+2)} \otimes S^{\otimes (n+2)} \rightarrow (\mathbf{B}(R) \otimes \mathbf{B}(S))_n$$
where $\phi_n = \sum_{p+q=n} (-1)^{pq} \phi_{q,p}$, with $\phi_{q,p}: R^{\otimes (n+2)} \otimes S^{\otimes (n+2)} \rightarrow \mathbf{B}(R)_q \otimes \mathbf{B}(S)_p$ defined
$$\begin{aligned} & (1_R \ten r_1 \ten \cdots \ten r_n \ten 1_R) \otimes (1_S \ten s_1 \ten \cdots \ten s_n \ten 1_S) \mapsto \cdots \\
 \cdots &\mapsto (r_1 \cdots r_p) \otimes r_{p+1} \otimes \cdots \otimes r_n \otimes 1_R \otimes 1_S \otimes s_1 \otimes \cdots \otimes s_p \otimes (s_{p+1} \cdots s_n).
\end{aligned}$$

 \item $\theta$ is defined bilinearly:
 $$ \theta: (\mathbf{B}(R) \otimes \mathbf{B}(S))_n \rightarrow R^{\otimes (n+2)} \otimes S^{\otimes (n+2)} $$
where $\theta_n = \sum_{p+q=n} (-1)^{pq} \theta_{p,q}$, with $\theta_{p,q}:\mathbf{B}(R)_p \otimes \mathbf{B}(S)_q \rightarrow  R^{\otimes (n+2)} \otimes S^{\otimes (n+2)}$ defined
$$ \begin{aligned}  1_R \otimes r_1 \otimes \cdots \otimes r_p \otimes &1_R \otimes 1_S \otimes s_{p+1} \otimes \cdots \otimes s_n \otimes 1_S \mapsto \cdots \\
\cdots  \mapsto \sum_{\sigma \in \mathfrak{S}_{p,q}} \sign(\sigma) &1_R \otimes \sigma\cdot(r_1 \otimes \cdots \otimes r_p \otimes \underbrace{1_R \otimes \cdots \otimes 1_R}_{q\textrm{ many}}) \otimes 1_R \otimes \cdots \\ \cdots\otimes &1_S \otimes \sigma \cdot( \underbrace{1_S \otimes \cdots \otimes 1_S}_{p\textrm{ many}}\otimes s_{p+1} \otimes \cdots \otimes s_n) \otimes 1_S .\end{aligned}$$
\end{itemize}
The maps unreduced Eilenberg-Zilber and Alexander-Whitney maps are defined as in the diagram. Now we only need to take our maps from the unreduced Bar Resolution to the reduced Bar Resolution. To do this, we fix sections $i_R: \overline{R} \rightarrow R$ $i_S: \overline{S} \rightarrow S$ of the surjections $R \rightarrow \overline{R}, S \rightarrow \overline{S}$. We extend these to the Bar Resolutions to obtain morphisms
$$\mathbb{B}(R) \xhookrightarrow{\inc_R} \mathbf{B}(R) \xrightarrow{\pr_R} \mathbb{B}(R)  \hspace{20pt} \mathbb{B}(S) \xhookrightarrow{\inc_S} \mathbf{B}(S) \xrightarrow{\pr_S} \mathbb{B}(S) $$

$$\mathbb{B}(R \twisten S) \xhookrightarrow{\inc_{R \twisten S}} \mathbf{B}(R \twisten S) \xrightarrow{\pr_{R \twisten S}} \mathbb{B}(R \twisten S) $$
$$\mathbb{B}(R) \twisten \mathbb{B}(S) \xhookrightarrow{\inc_{R} \twisten \inc_{S}}\mathbf{B}(R) \twisten \mathbf{B}(S)\xrightarrow{\pr_{R} \twisten \pr_S} \mathbb{B}(R) \twisten \mathbb{B}(S)$$
\begin{remark} While projection is simply taken levelwise, the splitting is induced by the Comparison Lemma, and so a priori we have little control over $\ezt$ and $\awt$. This is however handled by the key points we extract from the proof of the below Proposition. \end{remark}
\begin{definitionproposition}[Theorem 6.1 of \cite{Shepler}]\label{proofoftwist} With our maps set up as earlier, we define the twisted Eilenberg-Zilber and Alexander-Whitney maps as follows:
$$\awt := (\pr_{R} \twisten \pr_S) \cdot \bawt \cdot \inc_{R \twisten S}$$

$$\ezt := \pr_{R \twisten S} \cdot \bezt \cdot (\inc_{R} \twisten \inc_{S})$$
they satisfy $\awt \ezt = 1$.
\end{definitionproposition}
\begin{proof} We will not copy the proof in \cite{Shepler}, though we will highlight some key points that we will use later. \begin{enumerate} 
\item $\pr_{R \twisten S} \circ \inc_{R \twisten S} = \operatorname{Id}$ and $\pr_R \twisten \pr_S \circ \inc_R \twisten \inc_S = \operatorname{Id}$
\item $\pr_{R \twisten S}$ and $\pr_R \twisten \pr_S$ apply the projections $R \twisten S \rightarrow \overline{R \twisten S}$, $R \rightarrow \overline{R}$, and  $S \rightarrow \overline{S}$ tensor-coordinate wise on the unreduced Bar Resolution. 
\item The morphism $\pr_{R \twisten S} \bezt$ vanishes on $\operatorname{Ker}(\pr_R \twisten \pr_S) \subset \mathbf{B}(R) \twisten \mathbf{B}(S)$
\item The morphism $\pr_{R} \twisten \pr_{S} \bawt$ vanishes on $\operatorname{Ker}(\pr_{R \twisten S}) \subset \mathbf{B}(R \twisten S)$
\end{enumerate}
\end{proof}
\section{Proof of Theorem}
Having discussed the preliminaries thoroughly, we can move on to the proof of our main result. Suppose $R$ and $S$ are Frobenius $k$-algebras graded by Abelian groups $A$ and $B$ respectively, admitting a twisting map $t:A \otimes B \rightarrow k^\times$. Suppose further that there exist $\sigma_R \in A$ and $\sigma_S \in B$ so that the inner products $R \otimes R \rightarrow k[\sigma_R]$ and $S \otimes S \rightarrow k[\sigma_S]$ are graded, and that $\nu_R$ and $\nu_S$ are semisimple.
\begin{theorem} Given $f \in C^n(R, R_{\hat{c}})^d$ and $g \in C^m(S, \prescript{}{\hat{d}}S)^c$, the $\bv$-operator defined in \cite{Lambre} $\Delta:\HH^*(R \twisten S, R \twisten S) \rightarrow \HH^{*-1}(R \twisten S, R \twisten S)$ is described as follows:

$$\tilde{\Delta}(f\boxtimes g) =  (-1)^{mn}(\Delta_c(f) \boxtimes g +(-1)^{n} f \boxtimes \; \prescript{}{d}{\Delta(g)})$$
\end{theorem}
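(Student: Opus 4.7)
The plan is to evaluate the inner product pairing
$$\langle \tilde{\Delta}(f \boxtimes g)([\chi_1|\cdots|\chi_N]), \chi_0 \rangle_{R \twisten S}$$
(writing $\chi_j = r_j \otimes s_j$ and $N = n+m-1$) using the explicit cyclic formula for the $\bv$-operator established in the earlier lemma, and then to transport the computation onto $\mathbb{B}(R) \twisten \mathbb{B}(S)$ via the twisted Alexander--Whitney map $\awt$, where $f \boxtimes g$ is naturally defined as an element of $\homo_k(BR \otimes BS, R \otimes S)$. For each term in the cyclic sum $\sum_i (-1)^{iN}(\cdots)$, pushing through $\awt$ produces a linear combination of tensors $[r'_1|\cdots|r'_p] \otimes [s'_1|\cdots|s'_q]$; the vanishing properties of the projection composed with $\bezt, \bawt$ recorded in Proposition~\ref{proofoftwist}, together with the fact that $f \boxtimes g$ is supported in bidegree $(n,m)$, force only a single ``pure $(n,m)$-split'' summand per $i$ to survive the pairing.

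By the corollary of Section~\ref{NakayamaCochains}, one may assume without loss of generality that $f$ and $g$ are $T^*$-invariant with respect to their respective twisted Nakayama automorphisms, so that interior applications of $\nu_R \hat{c}^{-1}$ inside $f$ and $\nu_S \hat{d}$ inside $g$ become scalar characters acting on the arguments. The combinatorial core of the proof is then to split the cyclic sum into the subrange over which the rotation only disturbs the $R$-components within the arguments of $f$ and the subrange over which it only disturbs the $S$-components within the arguments of $g$. After the split, matching with the explicit formulas for $\Delta_c$ and $\prescript{}{d}{\Delta}$ proved earlier yields $(-1)^{mn}\Delta_c(f) \boxtimes g$ from the first block and $(-1)^{mn+n} f \boxtimes \prescript{}{d}{\Delta}(g)$ from the second, the relative sign $(-1)^n$ accounting for the shift in the cyclic index needed to re-align with the formula for $\prescript{}{d}{\Delta}$.

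The main obstacle will be the careful bookkeeping of twisting scalars coming from $t$. Each commutation of tensor factors in $\awt$, each evaluation of the twisted Nakayama automorphism
$$\nu_{R \twisten S}(a \otimes b) = t(|a|, \sigma_S) t(\sigma_R, |b|)^{-1} \nu_R(a) \otimes \nu_S(b),$$
and each use of the twisted inner product $\langle -,- \rangle_{R \twisten S}$ contributes a factor of the bicharacter. These factors must telescope exactly to reproduce the coefficients $t(-, c)^{-1}$ appearing in the isomorphism $R_{\hat{c}} \cong D(R_{\nu \hat{c}^{-1}})$ underlying the definition of $\Delta_c$, and the dual factors $t(d, -)$ for $\prescript{}{d}{\Delta}$. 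The grading hypothesis on the inner products is precisely what forces these collected twistings to separate cleanly into independent characters $\hat{c}$ acting on $R$-arguments and $\hat{d}$ acting on $S$-arguments in the final expression, with the \emph{a priori} cross-twisting contributions from $t(|a|,\sigma_S)t(\sigma_R,|b|)^{-1}$ cancelling against those introduced by the inner product $\langle -,- \rangle_{R \twisten S}$.
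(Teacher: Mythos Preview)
Your outline has the right shape — the cyclic formula, the role of $\awt$, restricting to $T^*$-invariant representatives, the split into an ``$R$-block'' and an ``$S$-block'', and the twisting bookkeeping are all genuinely the ingredients of the paper's proof. But there is a structural gap in how you set up the evaluation, and it is exactly the step that makes the computation tractable.

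You propose to test $\tilde\Delta(f\boxtimes g)$ on a generic $[\chi_1|\cdots|\chi_N]\in B_N(R\twisten S)$ with $\chi_j=r_j\otimes s_j$ and then push through $\awt$. Two problems. First, $\tilde\Delta(f\boxtimes g)$ lives in $C_t^{N}$, whose domain is $(BR\otimes BS)_N$, not $B_N(R\twisten S)$; what you are really computing is $\Delta\bigl((f\boxtimes g)\circ\awt\bigr)$, and to recover $\tilde\Delta$ you must precompose with $\ezt$. Second, and more seriously, you never invoke $\ezt$, and on a \emph{generic} $\chi_j$ your ``single pure $(n,m)$-split per $i$'' is only half of the needed collapse. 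It is true that $f\boxtimes g$ sees only the $(n,m)$-component $\phi_{n,m}\circ\rho^{-1}$ of $\awt$, but on arbitrary $\chi_j$ that component carries products $r_{j_1}\cdots r_{j_k}$ and $s_{j_1}\cdots s_{j_k}$ in the free module slots of $\mathbb{B}(R)_n\twisten\mathbb{B}(S)_m$. Those products do not simplify to the cyclic arguments appearing in the formulas for $\Delta_c$ and $\prescript{}{d}{\Delta}$, and matching the two sides becomes an entirely different (and much harder) identity than the one you sketch.

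The paper's route is to evaluate on $[r_1|\cdots|r_n]\otimes[s_{n+1}|\cdots|s_{n+m}]\in B_nR\otimes B_mS$, so that $\tilde\Delta(f\boxtimes g)$ is computed as $\Delta\bigl((f\boxtimes g)\circ\awt\bigr)\bigl(\ezt([r]\otimes[s])\bigr)$. The point of routing through $\ezt$ first is that its image is a signed sum over $(n,m)$-shuffles of elements whose tensor slots are all of the form $r_i\otimes 1$ or $1\otimes s_j$. After applying the Connes rotation and then $\awt$, the presence of these units forces almost every combination of shuffle $\sigma$ and rotation index $i$ to die in the reduced bar complex: for each $i$ \emph{exactly one} shuffle survives, and moreover only $\phi_{n+1,m}$ contributes when $i\le n$ and only $\phi_{n,m+1}$ when $i\ge n$. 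It is this second-level collapse — not the bidegree support of $f\boxtimes g$ — that produces the clean cyclic sums matching $\Delta_c(f)$ and $\prescript{}{d}\Delta(g)$. Your plan as written does not access this mechanism; inserting the $\ezt$ step and specialising your test element to the shuffle image is the missing idea.
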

\hspace{10pt}
\begin{proof} 
\begin{pfparts}
\item[\namedlabel{pfpart:setup}{Setup}.]
Before diving head first into reasoning about the box product of functions, we will consider exactly how our arguments are transformed by the ``Twisted Connes Operator''. So we begin with $h\in C_t^{N}(R \twisten S, R \twisten S)^{a,b}$; as the $\bv$-operator is defined on the reduced Bar Resolution of $R \twisten S$, we will have to compute $\Delta(h)$ as follows:
\begin{center}
\begin{tikzcd}
  C_t^{n+m}(R \twisten S, R \twisten S) \arrow[d, "(\awt)^*"] \arrow[r, dotted] &  C_t^{n+m-1}(R \twisten S, R \twisten S) \\
 C^{n+m}(R \twisten S, R \twisten S) \arrow[r,"\Delta"] & C^{n+m-1}(R \twisten S, R \twisten S) \arrow[u, "(\ezt)^*"]
\end{tikzcd}
\end{center}
So then, writing $\tilde{\Delta}$ for the $\bv$-operator on the twisted complex, and choosing $N \in \mathbbm{N}$ and $n,m$ so that $n+m=N-1$ we have that
$$\begin{aligned} \tilde{\Delta}(h)([r_1 | \cdots | r_n] \ten \els) &= (\Delta \circ (\awt)^*)(h) (\ezt([r_1 | \cdots | r_n] \otimes \els)) \\
 &= \Delta(h \circ \awt)(\ezt ([r_1 | \cdots | r_n] \otimes \els)) \end{aligned}$$
 where $[r_1 | \cdots | r_n] \in B_n(R)$, $\els \in B_m(S)$. So we have
 $$\begin{aligned} &\langle  \tilde{\Delta}(h)([r_1 | \cdots | r_n] \otimes \els), r_0 \otimes s_0 \rangle \\
 =& \langle \Delta(h \circ \awt)(\ezt ([r_1 | \cdots | r_n] \otimes \els)), r_0 \otimes s_0,  \rangle  \\
 =& \partial^{-1}(h \circ \awt)( B(r_0 \otimes s_0, \ezt([r_1 | \cdots | r_n ] \ten \els)))
 \end{aligned}$$
 Thus we first need to understand 
 $$\awt \circ ( B(r_0 \otimes s_0, \ezt([r_1 | \cdots | r_n ] \ten \els))$$
 \vspace{3pt}
\item[\namedlabel{pfpart:formofmap}{\noindent The form of $ {B(r_0 \otimes s_0, \ezt([r_1 | \cdots | r_n ] \ten \els))}$. }]
We recall
$$\begin{aligned} &\ezt(1_R \ten \elr \ten 1_R \ten 1_S \ten \els \ten 1_S) \\
=& \pr_{R \twisten S}\bezt(\inc_R(1_R \otimes \elr \otimes 1_R \otimes 1_S \otimes \els \otimes 1_S)) \end{aligned}$$
Our first concern is that a priori, we have little control over $\inc_R(1_R \otimes \elr \otimes 1_R \otimes 1_S \otimes \els \otimes 1_S)$, as it is induced from the comparison lemma. However, we know that
$$\begin{aligned} \inc_R([r_1 | \cdots | r_n]) & \twisten \inc_S(\els) - \cdots  \\ \cdots  &([\inc_R(r_1) | \cdots | \inc_R(r_n)] \twisten [\inc_S(s_1) | \cdots | \inc_S(s_m)]) \in \operatorname{Ker}(\pr_R \twisten \pr_S) \end{aligned}$$
So by point 3 of Definition/Proposition \ref{proofoftwist}
$$\begin{aligned} \ezt(\elr &\ten \els) = \pr_{R \twisten S} \bezt ([\inc_R(r_1) | \cdots | \inc_R(r_n) ] \ten [ \inc_S(s_1) | \cdots | \inc_S(s_m)]) \\
 &= \sum_{\sigma \in \mathfrak{S}_{n,m}} (-1)^{nm}\left(\prod_{\substack{1\leq a \leq n \leq b  \leq n+m \\ \sigma(b) < \sigma(a)}} t(r_a, s_b)^{-1} \right) \sign(\sigma) \sigma([r_1 | \cdots | r_n] \ten \els).
\end{aligned}$$
Writing $\nu$ as before, and $\pi:C_*(R \twisten S, R \twisten S_\nu) \rightarrow C_{*}(R \twisten S, R \twisten S_\nu)$ for
$$(1, a_0, \ldots, a_{N}) \mapsto (1, a_1, \ldots, \nu(a_0))$$
we can write 
$$B(a_0, \ldots, a_{N}) = \sum_{i=0}^n (-1)^{(i+1)N} \pi^i(1, a_1, \ldots, a_0)$$
And crucially we have 
\begin{equation}\label{formofb} B(r_0 \otimes s_0, \ezt (\elr \ten \els) = \hspace{-16pt}\sum_{\substack{i=0, \ldots, n+m  \\ \sigma \in \mathfrak{S}_{n,m}}}\hspace{-12pt}\lambda_{n,m, i,  \sigma}\pi^i(1, \sigma (r_1, \ldots, r_n, s_1, \ldots, s_m), r_0 \otimes s_0)\end{equation}
$$\lambda_{n,m, i, \sigma} := (-1)^{(i+1)(n+m)+nm} \sign(\sigma) \prod_{\substack{1\leq a \leq n \leq b  \leq n+m \\ \sigma(b) < \sigma(a)}} t(r_a, s_b)^{-1} $$\\[3pt]

 \item[\namedlabel{pfpart:elemenetdiscussion}{A Brief Discussion on Elements.}]
Before moving on to analysing the action of $\awt$, we briefly take stock and clear up some small details. First, note that in equation \ref{formofb} we have written $r_i, s_j$ for $r_i \otimes 1_S$ and $1_R \otimes s_j$, so that our equation fits on a single line, and we will continue this slight abuse of notation for the rest of the proof. We have also chosen to drop the $\inc_R$, again for compactness. Second, we note that, owing to fact 4 in the proof of Definition/Proposition \ref{proofoftwist}, similar to our reasoning concerning $\ezt$, we have for $[r_1 \otimes s_1 | \ldots | r_N \otimes s_N ] \in B(R \twisten S)$, that 
$$i_{R \twisten S}([r_1 \otimes s_1 | \ldots | r_N \otimes s_N ]) - [i_{R \twisten S}(r_1 \otimes s_1) | \ldots | i_{R \twisten S}(r_N \otimes s_N)] \in \operatorname{Ker}(\pr_{R \twisten S})$$
and thus their evaluations under $\pr_{R} \twisten \pr_S \bawt$ are equal. This allows us to describe $\awt$ by evaluating $\pr_R \twisten \pr_S \bawt$ on $[i_{R \twisten S}(r_1 \otimes s_1) | \ldots | i_{R \twisten S}(r_N \otimes s_N)]$. \\
At this point we will take a moment to clarify where all of our elements reside. Strictly, the element $B(r_0 \otimes s_0, \ezt (\elr \ten \els)$ lives in $C_{N}(R \twisten S, R \twisten S_\nu)$. At first glance, $h \circ \awt$ evaluates elements of $\mathbb{B}(R \otimes S)$, but we recall under the dualizing morphism $\partial$ that for $(a_0, \ldots, a_{N}) \in C_{N}(R \twisten S, R \twisten S_\nu)$ we will have
$$ \partial^{-1}(h \circ \awt) (a_0 , \ldots , a_{N}) = \langle (h \circ \awt)(1 \otimes [a_1 | \ldots | a_{N}] \otimes 1) , a_0\rangle_{R \twisten S}$$
So in context of the prior section of this proof, we will be interested in applying $f \boxtimes g \circ \awt$ to the element
\begin{equation*}\sum_{\substack{i=0, \ldots, n+m  \\ \sigma \in \mathfrak{S}_{n,m}}}\hspace{-12pt}\lambda_{n,m, i,  \sigma}\hat{\pi}^i[\sigma (r_1 \otimes 1 |\ldots | r_n \otimes 1 | 1 \otimes  s_1 | \ldots | 1 \otimes s_m) | r_0 \otimes s_0]\end{equation*}
where $\hat{\pi}[a_1, \ldots, a_{N}] = [a_{2}, \ldots, a_{N}, \nu(a_1)]$. From here we can begin to analyse the kernel of $\awt$.

 \item[\namedlabel{pfpart:kernelofawt}{The Kernel of $\mathbf{\awt}$.}]
We now seek to clarify which pure tensors remain after the application of $\awt$. Crucially, most terms vanish leaving us only with twisted $\bv$-operators. Proceeding from our prior discussion, set $r_{n+1}=\ldots=r_{n+m}=1_R$, $s_{1} = \ldots = s_{n}=1_S$, and writing $\tau := (n+m, n+m-1, \ldots, 1, 0) \in \mathfrak{S}_{n+m+1}$, $\rho^{-1}$ sends
$$\begin{aligned}
& \hat{\pi}^i[\sigma (r_1 \otimes 1 |\ldots | r_n \otimes 1 | 1 \otimes  s_1 | \ldots | 1 \otimes s_m) | r_0 \otimes s_0]\\
\mapsto& \mu_{n,m,i,\sigma}1_R \otimes  r_{(\tau^i\sigma)^{-1}(1)} \otimes \cdots \otimes r_0 \otimes \nu(r_{(\tau^i\sigma)^{-1}(n+m+1-i)}) \otimes \cdots \otimes  \nu(r_{(\tau^i\sigma)^{-1}(n+m)}) \otimes 1_R \otimes \cdots \\
& \cdots 1_S \otimes  s_{(\tau^i\sigma)^{-1}(1)} \otimes \cdots \otimes s_0 \otimes \nu(s_{(\tau^i\sigma)^{-1}(n+m+1-i)}) \otimes \cdots \otimes  \nu(s_{(\tau^i\sigma)^{-1}(n+m)}) \otimes 1_S
\end{aligned}$$
where the scaling prefactor is defined $\mu_{n,m,i,\sigma}:=\prod_{\substack{1 \leq a \leq n < b \leq n+m \\ \tau^i\sigma(b) < \tau^i\sigma(a)}} t(r_a, s_b)$.\\[2pt]

The map $\awt$ is the composition of this map with $\phi_{n+m+1} = \sum (-1)^{l(n-l)}\phi_{n+m+1-l, l}$. We note crucially that owing to the form of elements arising from $\ezt$, we have at most $(n+1)$ ``coordinates'' of our tensor which are not $1$; therefore, any mapping 
$$\phi_{n+m+1-l,l}: R^{\otimes (n+2)} \otimes S^{\otimes (n+2)} \rightarrow R  \otimes \bar{R}^{n+m+1-l}\otimes R \otimes S \otimes \bar{S}^l \otimes S$$ 
will vanish when $n+m+1-l > n+1$, as should any of the first tensor coordinates be a unit, the element will vanish in $\overline{R}^{n+m+1-l}$. This tells us that $m \leq l$. Similarly, we have at most $m+1$ non unital elements of $S$ in our pure tensor, so that we must have $l \leq m+1$. So we only have to consider two terms in this sum: $l=m,m+1$. \\[10pt]
$l=m$: We know that when $l=m$, we map into $R  \otimes \bar{R}^{n+1}\otimes R \otimes S \otimes \bar{S}^m \otimes S$, and so we know our non-unital elements of $R$ must be exactly in the last $n+1$ $R$-tensor coordinates in $Y_n$, or else our element will vanish under application of $\phi_{n+1,m}$. Suppose $i \leq n$. This forces our shuffle permutation $\sigma$ to be  
$$\sigma(a)= \begin{cases}  a \textrm{ for } a \leq i \\ a + m \textrm{ for } i < a \leq n \\  a+i-m \textrm{ for } n<a \leq n+m \\ \end{cases}$$   \\
all other permutations will vanish under $\phi_{n+1, m}$. To make this explicit, before application of $\rho^{-1}$ our elements will be as follows:
$$\begin{aligned} &\hat{\pi}^i[r_1 \otimes 1 | \ldots | r_i \otimes 1| 1 \otimes s_{n+1} | \ldots | 1 \otimes s_{n+m} | r_{i+1} \otimes 1 | \ldots | r_{n} \otimes 1 | r_0 \otimes s_0] \\
&= [1 \otimes s_{n+1} | \ldots | 1 \otimes s_{n+m} | r_{i+1} \otimes 1 | \ldots | r_0 \otimes s_0| \nu(r_1 \otimes 1) | \ldots |\nu(r_i \otimes 1)] \end{aligned}$$
and are mapped under $\rho^{-1}$ to 
$$\begin{aligned} & 1_R \otimes \underbrace{1_R \otimes \ldots  \otimes 1_R}_{m-\textrm{many}} \otimes  (r_{i+1}) \otimes \ldots \otimes r_0 \otimes \nu(r_1 ) \otimes \ldots \otimes \nu(r_i ) \otimes 1_R \otimes \\
&\cdots \otimes 1_S \otimes s_1 \otimes \ldots \otimes s_m \otimes \underbrace{1_S \otimes \ldots \otimes 1_S}_{(n-i)-\textrm{many}} \otimes s_0 \otimes 1_S \otimes \ldots \otimes 1_S \otimes 1_S,
\end{aligned}$$
which then under $\phi_{n+1, m}$ finally maps to
$$1_R \otimes [r_{i+1} | \ldots | r_0 | \nu(r_1) | \ldots | \nu(r_i) ] \otimes 1_R \otimes 1_S \otimes \els \otimes s_0 \in \mathbb{B}(R)_{n+1} \otimes \mathbb{B}(S)_m.$$
with the scaling prefactor, we have $\mu\lambda:=(-1)^{(i+1)n + nm}t(r_0,s_0)^{-1}\prod_{a\leq i} t(r_a,s_0 s_{n+1}\ldots s_{n+m})$.\\[5pt]
Now suppose that $i>n$. Then, we can keep track of where the element $r_0$ (independent of shuffle) is sent under the action of $\hat{t}^i$. This is the $(n+m+1-i)^{th}\leq m^{th}$ tensor coordinate of $R$. But then, we need the $n+1$ elements non unital elements to be in the last $(n+1)$-tensor coordinates, so that this pure tensor must vanish under $\phi_{n+1,m}$. \\[5pt]
$l=m+1$: We can perform an entirely similar argument for $l=m+1$, though our shuffle permutation will be different. First suppose that $i < n$. We need that all of the $m+1$ non-unital elements of $S$ must lie in the first $(m+1)$-tensor coordinates of $S$, so that our element does not vanish under application of $\phi_{n,m+1}$. But notably, $s_0$ will be situated in the $(n+m+1-i)^{th} > m+1^{th}$-tensor coordinate. So this term will vanish under $\phi_{n,m+1}$. \\
So we suppose that $i \geq n$. In this situation, we see the only shuffle permutation $\sigma$ producing to a non-vanishing term will be
$$\sigma(a) = \begin{cases} a+i \textrm{ for } a \leq n \\ a-n \textrm{ for }  n < a \leq i\\ a \textrm{ for } i<a. \end{cases}$$
Again, making this explicit, this element will be of the form\footnote{This is not quite correct for when $i=n$, though it does not take much imagination to see what the extension is.}
$$\begin{aligned} &\hat{\pi}^i[1 \otimes s_{n+1} | \ldots | 1 \otimes s_{i}| r_1 \otimes 1 | \ldots | r_n \otimes 1 | 1 \otimes s_{i+1} | \ldots | 1 \otimes s_{n+m} | r_0 \otimes s_0] \\
&= [1 \otimes s_{i+1} | \ldots | r_0 \otimes s_0 | \nu(1 \otimes s_{n+1}) | \ldots | \nu(1 \otimes s_{i}) | \nu(r_1 \otimes 1) | \ldots | \nu(r_n \otimes 1)], \end{aligned}$$
which then under $\rho^{-1}$ maps to
$$\begin{aligned} & 1_R \otimes \underbrace{1_R \otimes \ldots \otimes 1_R}_{(n+m+1-i)-\textrm{many}} \otimes r_0 \otimes \ldots \otimes \nu(r_{1}) \otimes \ldots \otimes \nu(r_n) \otimes 1_R \otimes \\
&\cdots \otimes 1_S \otimes s_{i+1} \otimes \ldots \otimes s_0 \otimes \nu(s_{n+1}) \otimes \ldots \otimes \nu(s_{i}) \otimes 1_S \otimes \ldots \otimes 1_S \otimes 1_S,
\end{aligned}$$
which then under $\phi_{n,m+1}$ maps to
$$r_0 \otimes [\nu(r_1) | \ldots | \nu(r_n) ] \otimes 1_R \otimes 1_S \otimes [s_{i+1} | \ldots | s_m | s_0 | \nu(s_1) | \ldots | \nu(s_{i}) ] \otimes 1_S\in \mathbb{B}(R)_{n} \otimes \mathbb{B}(S)_{m+1},$$
with prefactor $$\begin{aligned}\mu\lambda& = (-1)^{m(j+1) +n(m+1)}t(r_0,s_0)^{-1}\prod_{a> i} t(r_0 \ldots r_n, s_a) \\
&= t(a,b)t(r_0,s_0)^{-1} (-1)^{m(j+1) +n(m+1)}\prod_{n < a \leq i}t(r_0, \ldots r_n, s_a)^{-1}\end{aligned},$$
where we have written $j+n=i$, $|[s_0 | s_{n+1} | \ldots | s_{n+m}]| =: b$, $b-|s_0|=b'$, and $|[r_0 | \ldots | r_n]| =: a$.
\item[\namedlabel{pfpart:bvoperator}{The Form of the $\operatorname{BV}$-operator.}]
So we have all of the pieces to put together the form of our operator. We can now write down explicitly the form of the element
$$\begin{aligned} 
z:=& \awt \circ B(r_0 \otimes s_0, \ezt (\elr \ten \els))= \ldots  \\
\ldots= &\sum_{i=0, \ldots, n} (-1)^{x_i}t(r_0,s_0)^{-1}\prod_{a\leq i} t(r_a, b) 1_R \otimes [r_{i+1} | \ldots | r_0 | \nu(r_1) | \ldots | \nu(r_i) ] \otimes \ldots \\
&\hspace{50pt}\ldots \otimes 1_R \otimes 1_S \otimes \els \otimes s_0 + \ldots \\
\ldots+&\sum_{j=0, \ldots, m} t(a,b)t(r_0,s_0)^{-1} (-1)^{y_j}\prod_{n < l \leq n+j} t(a, s_l)^{-1} r_0 \otimes [\nu(r_1) | \ldots | \nu(r_n) ]  \otimes \ldots\\ &\hspace{50pt}\ldots \otimes 1_R \otimes 1_S \otimes [s_{j+n+1} | \ldots | s_{n+m} | s_0 | \nu(s_{n+1}) | \ldots | \nu(s_{j+n}) ] \otimes 1_S
\end{aligned}$$
where we have written $x_i := (i+1)n + nm$, $y_j := m(j+1) +n(m+1)$. From here we can write down explicitly our evaluation of $f \boxtimes g$; recall that for $b \in B$ we write $\hat{b}$ for the ring automorphism of $R$ scaling the homogeneous parts of $R$ by $t(b,-)$. Write $\nu_{b,R} = \hat{b} \circ \nu_R$, $\nu_{a,S} = \hat{a} \circ \nu_S$, which we will refer to as twisted Nakayama Automorphisms. \\[5pt]
It is crucial to recall that the map $\nu$ above is the Nakayama automorphism for $R\twisten S$. Applying this to $r \in R$, we get $\nu(r \otimes 1) = t(|r|,\sigma_S)\nu_R(r)=\nu_{\sigma_S, R}(r)$. The twisting prefactors will be absorbed into our twisted Nakayama automorphisms, so that our twistings are correct to use our twisted $\bv$-operators. \\[5pt]
We briefly recall from \cite{BriggsWitherspoon} the sign convention on $f\boxtimes g$. We have, for $r=[r_1|\ldots|r_n]\in B_n(R)$ and $[s]=[s_1|\ldots|s_n] \in B_m(S)$
$$f \boxtimes g ([r] \otimes [s]) = (-1)^{mn}f([r]) \otimes g([s]).$$
We know that the Hochschild Cohomology of $R \twisten S$ is spanned by elements of the form $f \boxtimes g \in C^{N}_t (R \twisten S, R \twisten S)$, where $f \in C^{n'}_\nu(R, R_{\hat{c}})^d$, $g \in C^{m'}(S, \prescript{}{\hat{d}}{S})^c$ with $n'+m'=N$. Crucially, we note that our map will vanish when $n'\not=n,n+1$. So then, fixing $n',m'$, when $n'=n+1$, we have $m=m'$ and:
$$ \begin{aligned} &f \boxtimes g (z) = (-1)^mt(r_0, b')\sum_{i=0}^n (-1)^{(i+1)n} f([r_{i+1} | \ldots | r_0 | \nu_{b+ \sigma_S, R}(r_1) | \ldots | \nu_{b+ \sigma_S, R} (r_i)]) \otimes \ldots \\
&\hspace{50pt}\ldots \otimes g(\els) s_0 ,\end{aligned}$$
where here the factor $\prod_{a\leq i} t(r_a, b)$ has been absorbed into the twisted Nakayama Automorphism. When $n'=n$, we have $m'=m+1$ so that
$$ \begin{aligned} &\hspace{-15pt}f \boxtimes g (z) = t(r_0, b')\sum_{j=0}^m (-1)^{(j+1)m} r_0 f([\nu_{ b+\sigma_S}(r_1) | \ldots | \nu_{ b+\sigma_S}(r_n)]) \otimes \ldots \\
&\hspace{30pt}\ldots \otimes g([s_{j+n+1} | \ldots | s_0 | \nu_{-a- \sigma_R, S}(s_{n+1}) | \ldots | \nu_{-a- \sigma_R,S} (s_{j+n})]),\end{aligned} $$
where we have manipulated the factors in the following way. The factors of $t(r_1\ldots r_n, b)$ and $\prod_{n < l \leq i} t(a, s_l)^{-1}$ were absorbed into the Nakayama Automorphisms on $R$ and $S$ respectively. This leaves us with a factor of $t(r_0,b)'$.\\[5pt] 
So we can split this into two cases.
\begin{itemize} 
\item When $n'=n+1$ 
$$\begin{aligned} &\langle \tilde{\Delta}(f \boxtimes g)([r_1 | \cdots | r_n] \ten \els), r_0 \otimes s_0 \rangle_{R \twisten S}=  \ldots \\
\ldots = &(-1)^mt(r_0,b')\sum_{i=0}^n (-1)^{(i+1)n}\langle 1_{R}, f([r_{i+1} | \ldots | r_0 | \nu_{b+ \sigma_S, R}(r_1) | \ldots | \nu_{b+ \sigma_S, R} (r_i)])  \rangle_R \ldots \\ 
& \hspace{20pt}\ldots\langle 1_S, g( \els)s_0 \rangle_S \end{aligned} $$
Crucially, we notice that this vanishes when
$$\langle 1_S , g( \els)s_0 \rangle_S = \langle g(\els), s_0 \rangle = 0.$$
Thus, we know we can assume $|s_0| + |g(\els)| + \sigma_S =0_S$. But we also know that $g$ is of degree $c$, so that this tells us
$$|s_0| + |\els| + c + \sigma_S =0_S$$
So that $b + \sigma_S = -c$, and by our previous discussion we can write this
$$\begin{aligned} &\langle \tilde{\Delta}(f \boxtimes g)(\elr \ten \els),r_0 \otimes s_0 \rangle_{R \twisten S}=  \ldots \\
&\hspace{30pt}\ldots = (-1)^m t(r_0,b'+c) \langle  \Delta_{c}(f)(\elr), r_0 \rangle_R \langle g( \els)s_0,  1\rangle_S \\
&\hspace{30pt} \ldots =(-1)^m\langle \Delta_c(f)([r_1 | \ldots | r_n]) \otimes g(\els), r_0 \otimes s_0 \rangle \end{aligned} $$
where the twisting factor or $t(r_0,c)$ comes from the form of the twisted $\bv$-operator. So crucially, as $r_0$, $s_0$ can vary over all of $R$, $S$ we have when $n'=n+1$
$$\tilde{\Delta}(f \boxtimes g)= (-1)^{n'm'}\Delta_c(f) \boxtimes g$$
where the minus sign comes from the form of the box product.
\item When $n'=n$, write $j=i-n$. We have
$$\begin{aligned} &\langle \tilde{\Delta}(f \boxtimes g)([r_1 | \cdots | r_n] \ten \els), r_0 \otimes s_0 \rangle_{R \twisten S}= \ldots \\
&\hspace{60pt}\ldots =t(r_0,b')\sum_{j=0}^m (-1)^{(j+1)m} \langle 1_R, r_0 f([\nu_{b+\sigma_S}(r_1) | \ldots | \nu_{b+\sigma_S}(r_n)]) \rangle_R  \ldots \\
&\hspace{60pt}\ldots \langle 1_S, g([s_{j+1} | \ldots | s_0 | \nu_{-a-\sigma_R, s}(s_1) | \ldots | \nu_{-a-\sigma_R,S} (s_{j})]) \rangle_S \end{aligned}. $$
Now just as before, we see that for our expression to be non-vanishing we must have $|f| + a + \sigma_R = 0$. So $d = |f| = -a - \sigma_R$. Moreover, we can also deduce that $c+b+\sigma_R=0$, so that we can write our expression as follows:
$$\begin{aligned} &\langle \tilde{\Delta}(f \boxtimes g)(\elr \ten \els),r_0 \otimes s_0 \rangle_{R \twisten S}=  \ldots \\
&\hspace{30pt}\ldots = t(r_0,b')\langle  r_0, f(T_{\nu \hat{c}^{-1}}(\elr))  \rangle_R \langle \prescript{}{d}{\Delta}(g)(\els), s_0 \rangle_S \end{aligned}$$
Now, as we chose $f \in C^{n'}_\nu(R, R_{\hat{c}})^d$ which we always may, we have the following invariance property stated at the end of Section \label{NakayamaCochains}.
$$t(r_0,c)\langle f([r_1, \ldots, r_n], r_0 \rangle_R = \langle  r_0, f(T_{\nu \hat{c}^{-1}}(\elr))  \rangle_R$$
So proceeding with this fact,
$$\begin{aligned}
&\hspace{30pt}\ldots = t(r_0,b')t(r_0,c)\langle  f(\elr), r_0 \rangle_R \langle \prescript{}{d}{\Delta}(g)(\els), s_0 \rangle_S \\
&\hspace{30pt}\ldots = t(r_0,b')t(r_0,c)t(r_0, c+b')^{-1}\langle   f(\elr) \otimes \prescript{}{d}{\Delta}(g)(\els), r_0 \otimes s_0 \rangle \\
&\hspace{30pt}\ldots=  \langle   f(\elr) \otimes \prescript{}{d}{\Delta}(g)(\els), r_0 \otimes s_0 \rangle, \end{aligned}$$
Thus, when $n'=n$
$$\tilde{\Delta}(f\boxtimes g) = (-1)^{ n'm'-n'} f \boxtimes \prescript{}{d}{\Delta(g)}$$
 \end{itemize}
 So finally, we recover our result
 $$\tilde{\Delta}(f\boxtimes g) =  (-1)^{m'n'}(\Delta_c(f) \boxtimes g +(-1)^{n'} f \boxtimes \prescript{}{d}{\Delta(g)})$$
\end{pfparts}
\end{proof}
\section{Computation of the $\bv$-structure on a family of Quantum Complete Intersections.}

As a demonstration of our methods, we will calculate the $\bv$-structure of quantum complete intersections $\Lambda_q(m,n)$, where $q$ is not a root of unity in $k$, just as in \cite{BriggsWitherspoon}. Calculating the $\bv$-operator when $q$ is a root of unity is nearly exactly the same, though there are more cases to consider.
\begin{remark} We note that, by definition of a $\bv$-algebra, we have for $f, g \in \HH^*(R \twisten S, R \twisten S)$ 
$$\Delta(f\cup g) -\Delta(f) \smile g + (-1)^{|f|} f\smile \Delta(g) = (-1)^{(|f|-1)|g|+1}[f,g]$$
so that to understand the $\bv$-operator it suffices to understand the $\bv$-operator on generators, as well as the cup product and the Gerstenhaber bracket. 
\end{remark}
\begin{proposition} Given $q\in k^\times$ not a root of unity, and $m, n\in \mathbbm{N}$ we have that
$$\HH^*(\Lambda_q(m,n)) \cong k[U]/U^2 \times_k \bigwedge\nolimits^*_k(V,W)$$
where $k[U]/U^2$ is in degree $0$, and $V,W$ in degree 1. The $\bv$-operator is given on generators by
$$\Delta(U)=0, \hspace{10pt}\Delta(V) =n-1, \hspace{10pt}\Delta(W) =m-1 $$
\end{proposition}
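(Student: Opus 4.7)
My plan is to combine the Briggs--Witherspoon decomposition with the main theorem of this paper: first identify which summands of the decomposition actually contribute in the $q$-not-a-root-of-unity case, then express each of $U$, $V$, $W$ as a concrete box product, and finally reduce the $\bv$ calculation on $\Lambda_q(m,n)$ to two untwisted $\bv$ calculations on the factors $\Lambda(m)$ and $\Lambda(n)$.

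To set up the Hochschild cohomology, I would compute $\HH^*(\Lambda(n), \Lambda(n)_{\hat{b}})^a$ using the standard two-periodic bimodule resolution of $\Lambda(n) = k[y]/y^n$. For the cochain differentials $\delta^*(r) = yr - q^{b} ry$ and $\gamma^*(r) = \sum_i q^{b(n-1-i)} y^i r y^{n-1-i}$ on twisted coefficients, a direct calculation shows that for $b \neq 0$ (so $q^b \neq 1$ since $q$ is not a root of unity) only $\HH^0 = k \cdot y^{n-1}$ survives, while for $b = 0$ one recovers the familiar two-periodic Hochschild cohomology of $\Lambda(n)$. Doing the analogous computation for $\Lambda(m)$ and applying the decomposition, I then match internal gradings across the two factors. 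Only four classes survive: the unit, $U = x^{m-1} \boxtimes y^{n-1}$ in the summand indexed by $(a,b) = (m-1, n-1)$, $V = 1 \boxtimes \eta$ and $W = \xi \boxtimes 1$ in the untwisted $(0,0)$ summand, where $\xi \in \HH^1(\Lambda(m), \Lambda(m))^0$ and $\eta \in \HH^1(\Lambda(n), \Lambda(n))^0$ are the Euler-type derivations $x \mapsto x$ and $y \mapsto y$, and their product $\xi \boxtimes \eta$ in cohomological degree $2$. The Briggs--Witherspoon cup product formulas then give the ring structure: $U^2 = 0$ comes from $x^{2(m-1)} = 0$; $V^2 = W^2 = 0$ by graded commutativity; and $UV = UW = 0$ because the only summand the product could possibly land in would require $\HH^1(\Lambda(n), \prescript{}{\hat{m-1}}{\Lambda(n)})$ or its mirror to be nonzero, which the calculation above rules out.

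The $\bv$ calculation is then a direct application of the main theorem. Since $U$ has cohomological degree zero on both factors, both twisted $\bv$-operators in the formula become empty sums, so $\Delta(U) = 0$. For $V = 1 \boxtimes \eta$ the main theorem gives $\Delta(V) = \Delta_0(1) \boxtimes \eta + 1 \boxtimes \prescript{}{0}{\Delta}(\eta)$; the first term vanishes (again by the empty-sum argument), and $\prescript{}{0}{\Delta}(\eta)$ is the ordinary untwisted $\bv$-operator on $\Lambda(n)$ applied to $\eta$. Using the explicit formula $\langle \Delta(\eta), y^j \rangle_{\Lambda(n)} = \langle \eta([y^j]), 1 \rangle_{\Lambda(n)} = j\,\delta_{j,n-1}$, solving yields $\Delta(\eta) = (n-1)\cdot 1_{\Lambda(n)}$, so $\Delta(V) = n - 1$. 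A symmetric argument on $\Lambda(m)$ (the second term carries the sign $(-1)^n = -1$ and so vanishes against $\Delta(1) = 0$) gives $\Delta(W) = m - 1$.

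I expect the main obstacle to be the combinatorial bookkeeping needed to rule out extraneous summands and verify the cross-product vanishings; once the four-generator description is confirmed, the $\bv$ calculation itself is a short application of the main theorem combined with the standard untwisted $\bv$ formula for the Euler derivation on a truncated polynomial ring.
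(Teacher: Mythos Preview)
Your approach is essentially the same as the paper's: compute the twisted cohomologies $\HH^*(\Lambda(n),\Lambda(n)_{\hat b})$ from the small two-periodic resolution, apply the Briggs--Witherspoon decomposition to isolate the four surviving classes, and then use the main theorem to reduce $\Delta$ on $V$ and $W$ to the untwisted $\bv$-operator on each factor. The one place you diverge is in how you evaluate that untwisted operator: the paper builds an explicit comparison map $\psi_1\colon P_1\to \mathbf B_1(\Lambda(n))$ to transport the cocycle $x[1]$ to the bar complex (obtaining $x^i\mapsto ix^i$) before applying the $\bv$ formula, whereas you simply declare $\eta$ to be the Euler derivation $y^j\mapsto jy^j$ on the bar complex and plug directly into $\langle\Delta(\eta),y^j\rangle=\langle\eta([y^j]),1\rangle$. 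That shortcut is legitimate, but you should say one sentence about why the generator of $\HH^1(\Lambda(n),\Lambda(n))^0$ really is represented by this derivation---this is exactly what the paper's comparison map verifies. Also note that your assignment $V=1\boxtimes\eta$, $W=\xi\boxtimes 1$ is swapped relative to the paper's proof (which sets $V=x[1]\otimes 1$); your convention is the one consistent with the values $\Delta(V)=n-1$, $\Delta(W)=m-1$ in the statement.
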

\begin{proof}
 We have the following projective resolution of $\Lambda(n)$,
\begin{center}
\begin{tikzcd}[column sep = huge]
P_\bullet: \ldots \arrow[r, "x \otimes 1 - 1 \otimes x"] & \Lambda(n)^{ev}[-n] \arrow[r, "\sum_{i=0}^{n-1} x^{n-1-i} \otimes x^i"] & \Lambda(n)^{ev}[-1] \arrow[r, "x \otimes 1 - 1 \otimes x"] & \Lambda(n)^{ev}
\end{tikzcd}
\end{center}
which after the application of $\operatorname{Hom}(-,\Lambda(n)_{\hat{b}})$ becomes
\begin{center}
\begin{tikzcd}[column sep = huge]
\operatorname{Hom}(P_\bullet, \Lambda(m)): \ldots &\arrow[l, "x (1-q^b)"]  \Lambda(n)[n]& \arrow[l, "x^{n-1}\sum_{i=0}^{n-1} q^{ib}"]  \Lambda(n)[1] &\arrow[l, "x (1-q^b)"]  \Lambda(n)
\end{tikzcd}
\end{center}
and taking cohomology, as in \cite{BriggsWitherspoon} we get
$$\HH^i(\Lambda(n), \Lambda(n)_{\hat{b}}) = \begin{cases}[@{}l@{\quad}r@{}l@{}] \Lambda(n) & & \textrm{if } i=0, b=0 \\ (x^{m-1}) & &\textrm{if } i=0, b\not=0 \\ \Lambda(n)/(nx^{n-1})[\frac{i}{2}n] & & \textrm{if } i>0 \textrm{ even, } b=0 \\ \operatorname{Ann}_{\Lambda(n)}(mx^{m-1})[\frac{i-1}{2}n+1] & & \textrm{if } i>0 \;\textrm{odd}, b =0 \\ 0 & & \textrm{otherwise} \end{cases}$$
Applying the theorem describing Hochschild Cohomology of twisted tensor products, we are left with 
$$\begin{aligned}
\HH^0(\Lambda_q(m,n))^{0,0} &= k(1 \otimes 1) \\
\HH^0(\Lambda_q(m,n))^{m-1,n-1} &= k(x^{m-1} \otimes x^{n-1}) \\
\HH^1(\Lambda_q(m,n))^{0,0} &= k(x[1] \otimes 1) + k(1 \otimes x[1]) \\
\HH^2(\Lambda_q(m,n))^{0,0} &= k(x[1] \otimes x[1])
\end{aligned}$$	
Now, writing $U:= x^{m-1} \otimes x^{n-1}$, $V= x[1] \otimes 1$ and $W= 1\otimes x[1]$, we first note that $\Delta(U)=0$ for degree reasons. From our theorem, we recover:
$$\Delta(x[1]\otimes 1) = \Delta_0(x[1]) \boxtimes 1 + (-1)^\alpha x[1] \boxtimes \Delta_0(1) =\Delta_0(x[1]) \boxtimes 1$$
But crucially, $\Delta_0(x)$ is simply the regular $\bv$-operator on $\HH^*(\Lambda(n),\Lambda(n))$. To compute this we need comparison maps between $P_\bullet$ and $\mathbf{B}_\bullet(\Lambda(n))$, but more precisely, for a comparison map $\psi:P_\bullet \rightarrow \mathbf{B}_\bullet(\Lambda(n))$, we need $\psi_1$. Such maps can be calculated inductively using weak self homotopies of $P_\bullet$, $\mathbf{B}_\bullet(R)$; doing this we learn that $\psi_1:P_1 \rightarrow B_1$ can be chosen to be
$$1 \otimes x^i \otimes 1 \mapsto \hspace{-10pt}\sum_{r+s=i-1} \hspace{-10pt} x^ry^s$$
We begin with $x[1] \in \HH^1(\Lambda(m),\Lambda(m))$, which is represented by the cochain $f:P_1 \rightarrow \Lambda(m)$ sending $1$ to $x$. Using our above constructed comparison map the representative $\tilde{f}:B_1 \rightarrow \Lambda(m)$ is the following composition:
$$ 1\otimes x^i \otimes 1 \xmapsto{\psi	_1} \hspace{-10pt}\sum_{r+s=i-1}\hspace{-10pt} x^ry^s \xmapsto{f} ix^i.$$
So then, we may compute the $\bv$-operator in the normal way, noting that $\Lambda(m)$ is symmetric 
$$\langle x^i, \Delta(\tilde{f})(1 \otimes 1) \rangle = \langle 1, f(1 \otimes x^i \otimes 1) \rangle = \langle 1, ix^i \rangle$$
from which we may deduce $\Delta(\tilde{f})(1 \otimes 1) = m-1$. The map $P_0 \rightarrow B_0$ is the identity, so that $\Delta(V)=m-1$. The process for computing $\Delta(W)$ is exactly the same.
\end{proof}
\begin{remark} We note that our calculation is consistent with those from \cite{BriggsWitherspoon}, wherein it is calculated that
$$[V,U] = (m-1)U$$
But our calculation tells us
$$[V,U] = -\Delta(VU) + \Delta(V) \smile U - V \smile \Delta(U) = \Delta(V) \smile U = (m-1)U$$
Note that this result also agrees with \cite{hou}.
\end{remark}

\bibliographystyle{plain} 
\bibliography{TwistedTensorReferences} 

\begin{thebibliography}{10}

\bibitem{bergh}
Petter~Andreas Bergh and Karin Erdmann.
\newblock (co)homology of quantum complete intersections, 2007.

\bibitem{BriggsWitherspoon}
Benjamin Briggs and Sarah Witherspoon.
\newblock Hochschild cohomology of twisted tensor products, 2020.

\bibitem{buchweitz}
R.~O. Buchweitz, E.~L. Green, D.~Madsen, and O.~Solberg.
\newblock Finite hochschild cohomology without finite global dimension, 2004.

\bibitem{erdmann}
Karin Erdmann and Magnus Hellstrøm-Finnsen.
\newblock Hochschild cohomology of some quantum complete intersections.
\newblock {\em Journal of Algebra and Its Applications}, 17(11):1850215,
  November 2018.

\bibitem{hou}
Bo~Hou and Jinzhong Wu.
\newblock Bv structure on hochschild cohomology of quantum exterior algebra
  with two variables.
\newblock {\em Algebra Colloquium}, 30(02):205--224, 2023.

\bibitem{Lambre}
Thierry Lambre, Guodong Zhou, and Alexander Zimmermann.
\newblock The hochschild cohomology ring of a frobenius algebra with semisimple
  nakayama automorphism is a batalin-vilkovisky algebra, 2014.

\bibitem{Zhou}
Jue Le and Guodong Zhou.
\newblock On the hochschild cohomology ring of tensor products of algebras.
\newblock {\em Journal of Pure and Applied Algebra}, 218(8):1463--1477, 2014.

\bibitem{oppermann}
Steffen Oppermann.
\newblock {Hochschild cohomology and homology of quantum complete
  intersections}.
\newblock {\em Algebra and Number Theory}, 4(7):821 -- 838, 2010.

\bibitem{Shepler}
Anne~V. Shepler and Sarah Witherspoon.
\newblock Twisted tensor products: Alexander-whitney and eilenberg-zilber maps,
  2024.

\bibitem{Tradler}
Thomas Tradler.
\newblock The bv algebra on hochschild cohomology induced by infinity inner
  products, 2007.

\bibitem{Witherspoon}
S.J. Witherspoon.
\newblock {\em Hochschild Cohomology for Algebras}.
\newblock Graduate Studies in Mathematics. American Mathematical Society, 2019.

\end{thebibliography}
\end{document}